\def\R{\mathbb{R}}
\def\cI{\mathcal{I}}
\def\cO{\mathcal{O}}
\def\cT{\mathcal{T}}
\def\a{\alpha}
\def\b{\beta}
\def\g{\gamma}
\def\d{\delta}
\def\k{\kappa}
\def\p{\partial}
\def\veps{\varepsilon}
\def\vphi{\varphi}
\def\O{\Omega}
\def\GD{{\Gamma_{\rm D}}}
\def\DD{{\rm D}}
\def\tu{\widetilde{u}}
\newcommand{\dv}[1]{\,{\mathrm d}#1}
\newcommand{\wcheck}[1]{#1\hspace{-.8ex}\mbox{\huge {\lower.45ex \hbox{$\textstyle \check{}$}}} \hspace{.5ex}}
\DeclareMathOperator{\diver}{div}
\let\oldmarginpar\marginpar
\renewcommand\marginpar[1]{
  \oldmarginpar[\raggedleft\footnotesize #1]
  {\raggedright\footnotesize #1}}
\newtheorem{definition}{Definition}
\newtheorem{lemma}[definition]{Lemma}
\newtheorem{proposition}[definition]{Proposition}
\newtheorem{corollary}[definition]{Corollary}
\newtheorem{remark}[definition]{Remark}
\newtheorem{example}[definition]{Example}
\newtheorem{examples}[definition]{Examples}
\newtheorem{algorithm}[definition]{Algorithm}
\numberwithin{definition}{section}
\definecolor{tourquoise}{RGB}{0,170,180}	% #00AAB4
\definecolor{darkred}{RGB}{238,34,34}		% #EE2222
\definecolor{darkgreen}{RGB}{0,190,0}		% #00BE00
\definecolor{lightgray}{RGB}{210,210,210}	% #D2D2D2
\definecolor{deepblue}{RGB}{0,0,240}		% #0000FF
\definecolor{darkgray}{RGB}{144,144,144}	% #999999
\definecolor{kingblue}{RGB}{64,96,224}		% #4169E1 
\definecolor{gold}{RGB}{240,208,0}		% #FFD700
\definecolor{verydarkred}{RGB}{176,0,0}		% #B00000
\def\tu{\widetilde{u}}
\def\gek{\g_{\veps,k}}
\def\pO{{\partial \O}}
\begin{document}
\title[Stability of semi-implicit schemes]{Unconditional 
stability of semi-implicit discretizations of singular flows}
\author[S. Bartels]{S\"oren Bartels}
\address{Department of Applied Mathematics, 
Albert Ludwigs University Freiburg, Germany.}
\email{bartels@mathematik.uni-freiburg.de}
\author[L. Diening]{Lars Diening}
\address{Department of Mathematics, University of Bielefeld, Germany}
\email{lars.diening@uni-bielefeld.de}
\author[R. H. Nochetto]{Ricardo H. Nochetto}
\address{Department of Mathematics and Institute for Physical Science
and Technology, University of Maryland, College Park, MD.}
\email{rhn@math.umd.edu}
\date{\today}

\keywords{parabolic equations, time discretization, stability, convergence}

\subjclass{35K55, 65M12, 65M15, 65M60}

\begin{abstract}
A popular and efficient discretization of evolutions involving the 
singular $p$-Laplace operator is based on a factorization of the
differential operator into a linear part which is treated implicitly
and a regularized singular factor which is treated explicitly. It 
is shown that an unconditional energy stability property for
this semi-implicit time stepping strategy holds. Related error estimates 
depend critically on a required regularization parameter.
Numerical experiments reveal reduced experimental convergence rates
for smaller regularization parameters and 
thereby confirm that this dependence cannot be avoided in general.
\end{abstract}

\maketitle

\section{Introduction}
We discuss the numerical solution of minimization and evolution
problems related to the $p$-Dirichlet energy 
\[
E_p[u] = \frac1p \int_\O |\nabla u|^p\dv{x},
\]
with $1\le p < 2$. The Euler--Lagrange equations give rise to a singular
differential operator which requires a careful numerical treatment. 
Related problems occur in the description of minimal surfaces, 
porous media, non-Newtonian fluids, nonlinear elasticity,
and Newton's problem of minimal resistance; we refer the reader 
to~\cite{Dziu99,Cham04,DeDzEl05,FeOePr05,BeDiRu15,DiFoWa17-pre} for related 
results. Typically, standard numerical schemes such as Newton or 
Picard iterations fail to determine stationary configurations. 

Gradient flows provide a robust approach to find minimizers for 
functionals that involve $E_p$ or arise as models to describe 
certain nonlinear evolutions. In the simplest case this leads to
the equation 
\begin{equation}\label{eq:p_flow}
\p_t u - \diver \big(|\nabla u|^{p-2} \nabla u\big) = 0,
\end{equation}
subject to initial and boundary conditions. An implicit discretization
in time leads to the nonlinear recursion formula 
\begin{equation}\label{nonlinear-recursion}
d_t \tu^k - \diver \big(|\nabla \tu^k|^{p-2} \nabla \tu^k\big) = 0,
\end{equation}
for $k=1,2,\dots,K$, with a step-size $\tau>0$ and the backward 
difference quotient operator $d_t c^k = (c^k-c^{k-1})/\tau$. The
iterates $(\tu^k)_{k=0,\dots,K}$ are well defined and optimal 
error estimates
\[
\max_{k=0,\dots,K} \|u(t_k) - \tu^k\| = \cO(\tau),
\]
with $t_k = k \tau$, can be derived under appropriate conditions 
on the initial function~$u^0$, 
cf.~\cite{BarLiu93,BarLiu94,Rull96,NoSaVe00,DiEbRu07}.

Unfortunately, the development of efficient numerical schemes
for computing the iterates $(\tu^k)_{k=0,\dots,K}$ is far from 
being obvious. Moreover, including perturbation terms in the error
analysis of the implicit scheme shows that very restrictive stopping
criteria for the iterative approximate solution are necessary. 
It is therefore desirable to develop time-discretizations that lead to
linear systems of equations in every time step but still have good
stability properties. In fact, such schemes can then also be used as
iterative solvers for approximating nonlinear problems
such as \eqref{nonlinear-recursion}.

A popular approach to discretizing the nonlinear partial 
differential equation consists in defining iterates $(u^k)_{k=0,\dots,K}$
via a semi-implicit discretization of~\eqref{eq:p_flow}
and the corresponding sequence of linear problems
\begin{equation}\label{eq:simpl}
d_t u^k - \diver \big(|\nabla u^{k-1}|_\veps^{p-2} \nabla u^k \big) = 0,
\end{equation}
for $k=1,2,\dots,K$. Here, the use of a regularization of the
euclidean length, e.g., defined via $|a|_\veps = (a^2+\veps^2)^{1/2}$
with a positive parameter $\veps$, guarantees that the iterates
are well-defined. The unconditional well-posedness in the sense
of stability of the iteration is nonobvious due to the loss of 
monotonicity properties related to the implicit-explicit treatment
of the differential operator. 
It is the purpose of this article to demonstrate that the
iteration is nonetheless unconditonally energy stable and to provide
error estimates that control the influence of the regularization
and semi-implicit discretization on the quality of approximations. 
A related stability estimate has been proved for the mean 
curvature flow of graphs in~\cite{Dziu99} which corresponds to the 
case $p=1$ and $\veps = 1$.

We discuss now our unexpected observation for the special and most 
singular situation corresponding to the exponent $p=1$, the so-called
regularized {\em total variation flow}. Testing
the iterative scheme~\eqref{eq:simpl} with $d_t u^k$ and incorporating
a standard binomial formula leads to the identity
\[
\|d_t u^k\|^2  + \frac12 \int_\O 
\frac{d_t |\nabla u^k|^2 + \tau |d_t \nabla u^k|^2}{|\nabla u^{k-1}|_\veps} \dv{x} = 0.
\]
To identify the regularized energy $E_{1,\veps}$ on the left-hand side 
we employ difference quotient calculus and derive the formula
\[\begin{split}
d_t |a^k|_\veps  = d_t \frac{|a^k|_\veps^2}{|a^k|_\veps} 
& = \frac{d_t |a^k|_\veps^2}{|a^{k-1}|_\veps} + |a^k|_\veps^2 \, d_t \frac{1}{|a^k|_\veps} \\
& % = \frac{d_t |a^k|_\veps^2}{|a^{k-1}|_\veps} - |a^k|_\veps^2 \frac{d_t|a^k|_\veps}{|a^{k-1}|_\veps|a^k|_\veps} 
= \frac{d_t |a^k|_\veps^2}{|a^{k-1}|_\veps} - \frac{|a^k|_\veps d_t|a^k|_\veps}{|a^{k-1}|_\veps} \\
&= \frac{d_t |a^k|_\veps^2}{|a^{k-1}|_\veps} -  \frac12 \frac{d_t |a^k|_\veps^2 + \tau (d_t |a^k|_\veps)^2}{|a^{k-1}|_\veps} \\
&= \frac12 \frac{d_t |a^k|_\veps^2}{|a^{k-1}|_\veps} - \frac12 \frac{\tau (d_t |a^k|_\veps)^2}{|a^{k-1}|_\veps}.
\end{split}\]
Using this formula with $a^k = \nabla u^k$ and noting that $d_t |a^k|_\veps^2 = d_t |a^k|^2$ for the 
regularized euclidean length specified above, we find that
\[
\|d_t u^k\|^2 
+ d_t \int_\O |\nabla u^k|_\veps \dv{x} 
+ \frac{\tau}{2} \int_\O \frac{|d_t \nabla u^k|^2 + (d_t |\nabla u^k|_\veps)^2}{|\nabla u^{k-1}|_\veps} \dv{x} = 0.
\]
The last term on the left-hand side is nonnegative so that upon summation
over $k=1,2,\dots, L\le K$ and multiplication by $\tau$ we have the energy decay 
and unconditional stability property
\begin{equation}\label{unconditional-stab}
E_{1,\veps}[u^L] + \tau \sum_{k=1}^L \|d_t u^k\|^2 \le E_{1,\veps}[u^0],
\end{equation}
where $E_{1,\veps}$ results from replacing the euclidean length in $E_p$ with $p=1$ 
by a regularization. We will prove this inequality for a class of Orlicz type
functionals which includes the regularized $p$-Dirichlet energy as a special 
case. The arguments and the unconditional stability
estimate carry over verbatim to spatial discretizations
of the semi-implicit scheme. 

Good stability properties of a numerical scheme are important to obtain
useful error estimates. We derive bounds on the approximation error 
by controlling the differences between the iterates of the implicit 
and semi-implicit schemes and incorporating known error estimates for
the implicit discretizations. In contrast to the estimates for implicit
schemes we thereby obtain error estimates that involve a dependence on 
negative powers of the regularization parameter $\veps$. 
Moreover, we have to employ inverse estimates that introduce a critical dependence on the
spatial mesh-size~$h$. For lowest order continuous finite elements we obtain
the following error estimates
for the difference between the solution~$u$ of the gradient
flow \eqref{eq:p_flow} and
the approximations $(u_h^k)_{k=0,\dots,K}$ of the regularized,
semi-implicit scheme \eqref{eq:simpl}
\[\begin{split}
\max_{k=0,\dots,K} \|u(t_k) - u_h^k \| \le & \, c_{{\rm isf}} \tau^\a + 2 (c_{p,{\rm r}}T)^{1/2} \veps^{p/2} \\
& +
\begin{cases}
c_{1,{\rm i}} h^{\b} + c_{1,{\rm s}} \big(\tau h^{-2}
\veps^{-1}\big)^{1/2} & \mbox{for } p=1, \\
c_{p,{\rm i}} h^{\gamma} + c_{p,{\rm s}} \big(\tau h^{p-2} \veps^{p-2} \big)^{1/2} & \mbox{for } p>1,
\end{cases}
\end{split}\]
where $\b = 1/6$ or $1/4$ and $\gamma = 1-d(2-p)/8$.
The first term on the right-hand side results from the general analysis of implicit 
time discretization of subgradient flows, cf.~\cite{Rull96,NoSaVe00}; we 
have $1\le \a \le 2$, depending on regularity properties of the initial data. 
The second term accounts for the regularization of the evolution problem.
Spatial discretization errors due to the implicit scheme \eqref{nonlinear-recursion}
result in the first terms involving the positive powers of mesh-size $h$ under the 
case distinction. We observe a significant gap between the cases $p=1$ and $p>1$ which
is related to the fact that for $p>1$ regularity results for nonlinear parabolic
partial differential 
equations can be used, cf.~\cite{DiEbRu07}, while the analysis of the case $p=1$ is 
solely based on energy arguments using the limited regularity properties of solutions
provided by the problem, cf.~\cite{BaNoSa14,BaNoSa15}.
The exponent $\gamma = 1/6$ is generic while $\gamma = 1/4$ relies on
a total variation diminishing interpolation operator, which
is constructed in \cite{BaNoSa15} for special meshes and definition of
total variation using the $\ell^1$-norm for vectors.
We note that the constant $c_{p,{\rm i}}$ 
is expected and in fact has to deteriorate as $p\searrow 1$. The factor $h^{\b}$
can be replaced by $h$ if the reverse step-size condition $\tau \ge c h^{\a(p,d)}$ is
imposed. In our situation such a condition conflicts
with the last terms that involve the 
inverse of the mesh size. These terms result from the
semi-implicit time discretization \eqref{eq:simpl},
and here the gap between the two cases is related
to the strong monotonicity properties of the problem for $p>1$.

The outline of this article is as follows. In Section~\ref{sec:prelim} we 
specify notation and collect some basic estimates. Section~\ref{sec:stab}
is devoted to the generalization of the unconditional stability estimate
for semi-implicit discretizations of a class of singular flows
including \eqref{eq:p_flow}. An error analysis
for fully discrete schemes is provided in Section~\ref{sec:error}. 
Numerical experiments for the case $p=1$ illustrate our theoretical results
and are presented in Section~\ref{sec:numex}. 

\section{Preliminaries}\label{sec:prelim}
\subsection{Notation}
We use standard notation for Lebesgue and Sobolev spaces on the bounded
Lipschitz domain $\O\subset \R^d$. The inner product on $L^2(\O;\R^\ell)$
is denoted by $(\cdot,\cdot)$ and the corresponding norm by $\|\cdot\|$. 
For a closed, possibly empty subset $\GD\subset \p\O$ we let $W^{1,p}_\DD(\O)$
be the set of functions in $W^{1,p}(\O)$ that vanish on $\GD$; we write
$W^{1,p}_0(\O)$ if $\GD = \p\O$. 
The space $BV(\O)$ consists of all functions $v\in L^1(\O)$ with bounded
total variation, i.e., functions $v\in L^1(\O)$ with 
\begin{equation}\label{tv-norm}
|Dv|(\O) = \sup_{\xi \in C_0^\infty(\O;\R^d), \mbox{ }\|\xi\|_{L^\infty(\O)} \le 1} 
- \int_\O v \diver \xi \dv{x} < \infty.
\end{equation}
For a shape regular triangulation $\cT_h$ of the polyhedral domain~$\O$ into
simplices, we let 
\[
V_h = \big\{v_h\in C(\overline{\O}): v_h|_T \in P_1(T) \text{ for all } T\in \cT_h\big\},
\]
be the space of piecewise affine, continuous finite element functions on $\cT_h$.
The parameter $h>0$ represents the maximal mesh-size of the triangulation. 

\subsection{Difference calculus}
Given a sequence $(c^k)_{k=0,\dots,K}$ and a step size $\tau>0$ we
define the backward difference quotient via
\[
d_t c^k = \frac{1}{\tau} \big(c^k- c^{k-1}\big)
\]
for $k=1,2,\dots,K$. We note the discrete product and quotient rules
\[\begin{split}
d_t \big(c^k\cdot b^k\big) 
& = \big(d_t c^k\big) \cdot b^{k-1} + c^k \cdot \big(d_t b^k \big), \\
d_t \big(1/c_k\big) &= -d_t c^k / \big(c^{k-1}c^k\big).
\end{split}\]
Moreover, we have the identity
\begin{equation}\label{eq:prod}
c^k \cdot d_t c^k 
= \frac12 d_t \big|c^k\big|^2 + \frac{\tau}{2} \big|d_t c^k\big|^2.
\end{equation}
They have been used earlier in deriving \eqref{unconditional-stab}.

\subsection{Regularized euclidean length}
We consider a family of regularizations
$|\cdot|_\veps$, $\veps\in [0,\veps_0]$, of the euclidean length $|\cdot|$ 
such that for $\veps>0$ the mapping
\[
|\cdot|_\veps : \R^d \to \R_{\ge 0}
\]
is continuously differentiable and convex. We
assume that we have the estimate
\begin{equation}\label{eq:approx_mod}
\big||a|_\veps^p - |a|^p \big|\le c_{p,{\rm r}} \, \veps^p
\end{equation}
for all $a\in \R^d$ with a constant $c_{p,{\rm r}}>0$ that may depend 
on $1\le p < 2$. 

\begin{examples}
(i) For the {\em standard regularization} $|a|_\veps = (|a|^2+\veps^2)^{1/2}$ we
have for $a\in \R^d$ with $|a|= s \veps$ that 
\[
|a|_\veps^{p} - |a|^p 
= \big((s^2+1)^{p/2} - (s^2)^{p/2} \big) \veps^p = f(s^2) \veps^p \le \veps^p,
\]
since $f(r) = (r+1)^{p/2} - r^{p/2}$ is monotonically decreasing
with $f(0)=1$.  \\
(ii) The {\em truncated regularization} defined for $a\in \R^d$ and 
$\veps\ge 0$ via 
\[
|a|_\veps^p = \begin{cases}
|a|^p + (p/2-1)\veps^p & \mbox{for } |a| \ge \veps, \\
(p/2) \veps^{p-2} |a|^2 & \mbox{for } |a| \le \veps,
\end{cases}
\]
satisfies~\eqref{eq:approx_mod} with $c_{p,{\rm r}} = (2-p)/2$.
\end{examples}

\subsection{Subgradient flow and regularization}
We interpret the nonlinear evolution equation~\eqref{eq:p_flow} as 
a subgradient flow for the possibly regularized $p$-Dirichlet energy 
\[
E_{p,\veps}[u] = \frac1p \int_\O |\nabla u|_\veps^p \dv{x},
\]
for $u\in X$ with $X=W^{1,p}_\DD(\O)$. If $p=1$ and $\veps=0$ we define 
$E_{p,\veps}[u]$ as the total variation \eqref{tv-norm}
of $u$ and choose $X=BV(\O)$. 
The functionals $E_{p,\veps}$ are formally extended to $L^2(\O)$ by
assigning the value $+\infty$ to $u\in L^2(\O)\setminus X$. 
The existence of a unique function $u\in W^{1,2}([0,T];L^2(\O)) \cap L^\infty([0,T];X)$ 
which satisfies $u(0) = u^0$ continuously for a given $u^0\in L^2(\O)\cap X$ and  
\begin{equation}\label{eq:subflow}
-\p_t u \in \p E_{p,\veps}[u],
\end{equation}
for almost every $t\in (0,T)$ is well established 
for all $\veps\ge0$, cf.~\cite{Brez73}. 
Note that we always consider the subdifferential with respect to the 
$L^2$ scalar product, i.e., 
\[
\p E_{p,\veps}[u] = \big\{ s\in L^2(\O): (s,v-u)+ E_{p,\veps}[u] \le E_{p,\veps}[v]
\mbox{ for all } v \in L^2(\O) \big\}.
\]
We thus have that the inclusion \eqref{eq:subflow}
is equivalent to the variational inequality
\[
(-\p_t u, v-u) + E_{p,\veps}[u] \le E_{p,\veps}[v],
\]
for all $v\in L^2(\O)$ and $\veps\ge0$. For $\veps>0$, \eqref{eq:subflow} is also
equivalent to the equation
\begin{equation}\label{eq:reggradflow}
(\p_t u,v) + (|\nabla u|_\veps^{p-2} \nabla u,\nabla v) = 0,
\end{equation}
for all $v\in X$ and $t\in (0,T)$.
Letting $u$ and $u_\veps$ be the solutions
of the subgradient flows for a fixed $p\in [1,2)$, subject to the same initial
condition, and $\veps=0$ and $\veps>0$, respectively, we deduce 
from~\eqref{eq:approx_mod} via straightforward calculations that 
\[
\sup_{t\in [0,T]} \|u-u_\veps\| \le 2(c_{p,{\rm r}} T)^{1/2} \veps^{p/2}.
\]

\subsection{Implicit time discretization}
Given a time step $\tau>0$, stable approximations of the solution of the 
subgradient flow~\eqref{eq:subflow} are defined by the implicit Euler scheme 
\[
\tu^k = \mbox{argmin}_{v\in X} \, \frac{1}{2\tau} \|v-\tu^{k-1}\|^2 + E_{p,\veps}[v],
\] 
for $k=1,2,\dots,K$, initialized with $\tu^0 = u^0$. 
The sequence $(\tu^k)_{k=0,\dots,K}$ is uniquely defined and the
iterates satisfy 
\[
(-d_t \tu^k, v-\tu^k) + E_{p,\veps}[\tu^k] \le E_{p,\veps}[v],
\]
for all $v\in X$. We have the error estimate, cf.~\cite{Rull96,NoSaVe00}, 
\[
\max_{k=0,\dots,K} \|u(t_k) - \tu^k \|\le c_{{\rm isf}} \tau^\a,
\]
with $\a = 1/2$ if $E_{p,\veps}[u^0]< \infty$ and 
$\a=1$ if $\p E_{p,\veps}[u^0]\neq \emptyset$. 

\subsection{Spatial discretization}
A spatial discretization of the implicit time stepping scheme for the
subgradient flow 
determines iterates $(\tu_h^k)_{k=0,\dots,K} \subset X_h$
with $X_h = V_h \cap X$ 
for a suitable approximation $\tu_h^0$ of $u^0$ via the sequence of minimization
problems
\[
\tu_h^k = \mbox{argmin}_{v_h\in X_h} \frac{1}{2\tau} \|v_h - \tu_h^{k-1} \|^2 
+ E_{p,\veps}[v_h].
\]
Invoking~\cite{BaNoSa14,BaNoSa15,Bart15-book}
for the case $p=1$ and~\cite{DiEbRu07} for the case $p>1$, we have the error estimates 
\begin{equation}\label{eq:p_flow_est}
\max_{k=0,\dots,K} \|u(t_k)- \tu_h^k\| \le c_{{\rm isf}} \tau^\a + 2(c_{p,{\rm r}}T)^{1/2} \veps^{p/2}  +
\begin{cases}
  c_{1,{\rm i}} h^{\b} & \mbox{for } p=1, \\
  c_{p,{\rm i}} h^{\gamma} & \mbox{for } p>1,
\end{cases}
\end{equation}
for suitable choices of $\tu_h^0$ and with $\b = 1/6$ or $1/4$ and $\gamma=1-d(2-p)/8$.
The estimate of~\cite{BaNoSa14} for $p=1$ 
assumes homogeneous Neumann boundary conditions,
that $\O$ is star-shaped, and that $u^0\in BV(\O) \cap L^\infty(\O)$,
and holds with $\a = 1/2$ and $\b = 1/6$. The decay rate in space can be
improved to $\beta = 1/4$ upon utilizing a total variation
diminishing interpolation operator, whose construction is discussed
in \cite{BaNoSa15} for special cartesian meshes and definition of the
total variation in terms of $\ell^1$-norms of vectors.
On the other hand,
the estimate of~\cite{DiEbRu07} for $p\in (1,2)$ 
assumes homogeneous Dirichlet boundary conditions,
that $\O$ is convex, and that the initial value satisfies $u^0\in W^{1,2}_0(\O)$
and $\diver \big(|\nabla u^0|^{p-2}\nabla u^0)\big)\in L^2(\O)$. This result 
entails the condition $p>2d/(d+2)$ which can be omitted  
when $\p_t u$ is an admissible test function, i.e., in 
case of subgradient flows and smooth right-hand sides.
Note that the assumptions on $u^0$ imply $\p E_{p,\veps}[u_0]\neq \emptyset$ so that
we may choose $\a=1$ \cite{Rull96,NoSaVe00,DiEbRu07}.

We remark that in the error estimate~\eqref{eq:p_flow_est} the function~$u$ may be
replaced by the solution~$u_\veps$ of the regularized evolution equation 
in which case the term involving the factor $\veps^{p/2}$ can be omitted. 

\section{Generalized unconditional stability estimate}\label{sec:stab}
We next generalize our unconditional stability estimate for 
semi-implicit discretizations to a class of gradient flows for convex
energy functionals $E_\vphi: L^2(\O) \to \R\cup\{+\infty\}$ 
defined with functions $\vphi: \R_{\ge 0}\to \R_{\ge 0}$ via
\[
E_\vphi[u] = \int_\O \vphi(|\nabla u|) \dv{x}.
\]
We impose the following conditions on the energy density $\vphi$
which define a class of sub-quadratic Orlicz functions:
\begin{itemize}
\item[(C1)] $r\mapsto \vphi(r)$ is convex and continuously differentiable with $\vphi(0)=0$,
\item[(C2)] $r\mapsto \vphi'(r)/r$ is positive, nonincreasing, and continuous on $\R_{\ge 0}$.
\end{itemize} 
Condition~(C2) implies that the following semi-implicit time-stepping scheme is
well posed. 

\begin{algorithm}[Semi-implicit scheme]\label{alg:simpl}
Let $u^0 \in X$ and $\tau,\veps>0$; set $k=1$. \\
(1) Compute $u^k\in X$ such that for all $v\in X$ we have
\[
(d_t u^k,v) 
+ \Big(\frac{\vphi'(|\nabla u^{k-1}|)}{|\nabla u^{k-1}|}\nabla u^k,\nabla v \Big) = 0.
\]
(2) Stop if $(k+1)\tau > T$; otherwise increase $k\to k+1$ and continue
with~(1).
\end{algorithm}

The regularized $p$-Dirichlet energy occurs as a special case of
(C1) and (C2).

\begin{examples}\label{ex:orlicz_fns}
(i) The regularized $p$-Laplace gradient flow corresponds to the function 
\[
\vphi(r) = \frac1p |r|_\veps^p - \frac1p |0|_\veps^p,
\]
and we have  
\[
\vphi'(r) = |r|_\veps^{p-2} r \quad \mbox{and} \quad
\vphi'(r) = \max\{\veps,r\}^{p-2} r,
\]
in case of the standard and truncated regularizations of euclidean length, respectively.
In both cases~(C1) and (C2) are satisfied for $1\le p < 2$.
A particular feature of the truncated regularization is that a closed formula
for the convex conjugate of $\vphi(|a|) = (1/p)|a|_\veps^p$ is
available.

\medskip
(ii) The function $\vphi(r) = r \ln(e+r)$ occurs in the modeling of Prandtl--Eyring fluids
and satisfies conditions (C1) and (C2), cf.~\cite{Eyri36,BrDiFu12} for details. 
\end{examples}

We remark that a positive $\veps$ is only needed for well-posedness of the semi-implicit
iteration of Algorithm \ref{alg:simpl}. Its
unconditional stability is a consequence of an elementary lemma. 

\begin{lemma}\label{la:orlicz_stab}
Under condition $(C2)$ we have for all $a,b\in \R^d$ that
\[
\frac{\vphi'(|a|)}{|a|} b \cdot (b-a) \ge \vphi(|b|)- \vphi(|a|) 
+ \frac12 \frac{\vphi'(|a|)}{|a|} |b-a|^2. 
\]
\end{lemma}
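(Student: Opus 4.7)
\textbf{Plan for the proof of Lemma~\ref{la:orlicz_stab}.}
My first move is purely algebraic: I will bring the quadratic correction term to the left‑hand side and expand it to show that the target inequality is equivalent to a purely scalar one. Writing $\mu(r) = \vphi'(r)/r$ (which is well‑defined and continuous on $\R_{\ge 0}$ by (C2)), and using $b\cdot(b-a) = |b|^2 - a\cdot b$ together with $|b-a|^2 = |b|^2 - 2a\cdot b + |a|^2$, one computes
\[
\mu(|a|)\, b\cdot(b-a) - \tfrac12 \mu(|a|)\,|b-a|^2 = \tfrac12 \mu(|a|)\bigl(|b|^2 - |a|^2\bigr).
\]
So the claim reduces to proving, for all $s,t\ge 0$ (with $s=|a|$, $t=|b|$), the one‑dimensional inequality
\[
\vphi(t) - \vphi(s) \;\le\; \tfrac12\, \mu(s)\,\bigl(t^2 - s^2\bigr).
\]

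Next I will establish this scalar inequality directly from condition (C2). By the fundamental theorem of calculus and the definition of $\mu$,
\[
\vphi(t) - \vphi(s) = \int_s^t \vphi'(r)\dv{r} = \int_s^t \mu(r)\, r\dv{r}.
\]
If $t\ge s$, then $\mu(r) \le \mu(s)$ for every $r\in[s,t]$ by the monotonicity part of (C2), so $\int_s^t \mu(r)\,r\dv{r} \le \mu(s) \int_s^t r\dv{r} = \tfrac12 \mu(s)(t^2-s^2)$. If instead $t<s$, then $\mu(r)\ge \mu(s)$ on $[t,s]$, and flipping the integration limits introduces a minus sign that again yields the same bound. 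The degenerate case $s=0$ is handled by the same integration argument applied on $[0,t]$, using the continuity of $\mu$ at $0$ from (C2).

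The argument is essentially an exercise and I do not anticipate a serious obstacle; the only subtlety is ensuring that $\mu(|a|)$ is meaningful when $a=0$, which is precisely why (C2) demands continuity of $\mu$ on all of $\R_{\ge 0}$ rather than only on $(0,\infty)$. One may also package the scalar inequality more conceptually by noting that (C2) is equivalent to the concavity of $s\mapsto \vphi(\sqrt{s})$ on $\R_{\ge 0}$ (its derivative equals $\mu(\sqrt{s})/2$, which is nonincreasing), and the target inequality is then simply the supporting‑hyperplane property of this concave function evaluated at $s=|a|^2$ and $t=|b|^2$. I would mention both viewpoints, but execute the integration argument since it is self‑contained and avoids invoking differentiability of $\vphi(\sqrt{\cdot})$ at the origin.
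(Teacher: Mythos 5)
Your proof is correct and follows essentially the same route as the paper: the identical algebraic reduction via $2b\cdot(b-a)=|b|^2-|a|^2+|b-a|^2$ to the scalar inequality $\vphi(|b|)-\vphi(|a|)\le\tfrac12\,\vphi'(|a|)\,|a|^{-1}\bigl(|b|^2-|a|^2\bigr)$. The paper justifies this scalar step by the concavity of $y\mapsto\vphi(y^{1/2})$ and its supporting-hyperplane property (the second viewpoint you mention), while you integrate $\vphi'(r)=\bigl(\vphi'(r)/r\bigr)\,r$ directly; the two justifications are interchangeable and both rest solely on the monotonicity and continuity required in (C2).
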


\begin{proof}
Using the identity $2 b\cdot (b-a) =  |b|^2 -|a|^2 + |b-a|^2$ we note that
\[
\frac{\vphi'(|a|)}{|a|} b\cdot (b-a) 
= \frac12 \frac{\vphi'(|a|)}{|a|} \big(|b|^2-|a|^2\big) 
 + \frac12 \frac{\vphi'(|a|)}{|a|} |b-a|^2.
\]
Since $r\mapsto \vphi'(r)/r$ is nonincreasing, 
the function $\psi(y) = \vphi(y^{1/2})$ is concave on $\R_{\ge 0}$,
so that we have 
\[
\psi'(y) (z-y) \ge \psi(z)-\psi(y),
\]
for all $y,z\ge 0$. With $y=|a|^2$ and $z=|b|^2$ we deduce that 
\[
\frac12 \frac{\vphi'(|a|)}{|a|} \big(|b|^2-|a|^2\big) 
\ge \vphi(|b|) -\vphi(|a|).
\]
Combining these inequalities implies the asserted estimate.
\end{proof}

The following proposition states the general unconditional stability
estimate for energy functionals $E_\vphi$ under conditions (C1) and (C2).
The estimate provides control over certain dissipation terms which will
be needed for the error estimates derived in the subsequent section. 

\begin{proposition}[Energy stability]\label{prop:ener_stab}
Under conditions (C1) and (C2) 
the iterates $(u^k)_{k=1,\dots,K}$ of Algorithm~\eqref{alg:simpl}
satisfy for every $1\le L \le K:=\lfloor T/\tau \rfloor$
\[
E_\vphi[u^L] + \tau \sum_{k=1}^L \|d_t u^k\|^2  
+ \frac{\tau^2}{2} \sum_{k=1}^L \int_\O \frac{\vphi'(|\nabla u^{k-1}|)}{|\nabla u^{k-1}|}
|d_t \nabla u^k|^2 \dv{x} \le E_\vphi[u^0].
\]
\end{proposition}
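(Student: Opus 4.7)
The natural plan is to test the Euler equation of Algorithm \ref{alg:simpl} with $v = u^k - u^{k-1} = \tau\, d_t u^k$, which lies in $X$ since $X$ is a vector space. The first term then becomes $\tau\,(d_t u^k, d_t u^k) = \tau\,\|d_t u^k\|^2$, which supplies the desired velocity dissipation directly.

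For the second term the plan is to apply Lemma~\ref{la:orlicz_stab} pointwise a.e.\ in $\O$ with $a = \nabla u^{k-1}(x)$ and $b = \nabla u^k(x)$. The integrand in the bilinear form is exactly $\frac{\vphi'(|a|)}{|a|}\,b\cdot(b-a)$, so integrating the pointwise inequality over $\O$ gives
\[
\Bigl(\tfrac{\vphi'(|\nabla u^{k-1}|)}{|\nabla u^{k-1}|}\nabla u^k,\nabla u^k - \nabla u^{k-1}\Bigr)
\ge E_\vphi[u^k] - E_\vphi[u^{k-1}] + \tfrac12 \int_\O \tfrac{\vphi'(|\nabla u^{k-1}|)}{|\nabla u^{k-1}|}\,|\nabla u^k - \nabla u^{k-1}|^2 \dv{x}.
\]
Using $\nabla u^k - \nabla u^{k-1} = \tau\, d_t \nabla u^k$ converts the last term into $\tfrac{\tau^2}{2}\int_\O \tfrac{\vphi'(|\nabla u^{k-1}|)}{|\nabla u^{k-1}|}\,|d_t \nabla u^k|^2\dv{x}$, which is precisely the third term in the claimed inequality. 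A small point to check is that $\vphi'(r)/r$ is well defined at $r=0$; this is guaranteed by continuity in (C2), so the integrand is unambiguous on $\{|\nabla u^{k-1}|=0\}$ (where the factor $|d_t \nabla u^k|^2$ or $|\nabla u^k-\nabla u^{k-1}|^2$ is multiplied by the finite limit $\vphi'(0)/0^+$).

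Combining the two contributions yields the per-step inequality
\[
\tau\,\|d_t u^k\|^2 + E_\vphi[u^k] - E_\vphi[u^{k-1}] + \tfrac{\tau^2}{2}\int_\O \tfrac{\vphi'(|\nabla u^{k-1}|)}{|\nabla u^{k-1}|}\,|d_t \nabla u^k|^2 \dv{x} \le 0.
\]
Summing this bound over $k=1,\dots,L$ telescopes the energy terms $E_\vphi[u^k] - E_\vphi[u^{k-1}]$ into $E_\vphi[u^L]-E_\vphi[u^0]$, and rearranging produces the asserted estimate.

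There is essentially no hard step left: the entire nonlinear difficulty is concentrated in Lemma~\ref{la:orlicz_stab}, whose concavity of $\psi(y)=\vphi(y^{1/2})$ (forced by (C2)) is the technical heart. The remaining arguments are algebraic — a choice of test function, a pointwise invocation of the lemma, and a telescoping sum — so the main obstacle has already been dispatched before this proposition is reached.
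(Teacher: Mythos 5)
Your proof is correct and is essentially identical to the paper's: the paper tests with $v=d_t u^k$ and multiplies by $\tau$ after summing, while you test with $v=\tau\,d_t u^k$ directly, but both reduce the whole argument to a pointwise application of Lemma~\ref{la:orlicz_stab} with $a=\nabla u^{k-1}$, $b=\nabla u^k$ followed by a telescoping sum. No differences of substance.
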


\begin{proof}
Using $v=d_t u^k$ in the equation of Algorithm~\ref{alg:simpl} leads to 
\[
\|d_t u^k\|^2 +
\int_\O \frac{\vphi'(|\nabla u^{k-1}|)}{|\nabla u^{k-1}|} \nabla u^k \cdot d_t \nabla u^k \dv{x}
= 0.
\]
Lemma~\ref{la:orlicz_stab} with $a=\nabla u^{k-1}$ and $b=\nabla u^k$ implies that
\[
\|d_t u^k\|^2 + d_t \int_\O \vphi(|\nabla u^k|) \dv{x} 
+ \frac{1}{2\tau} \int_\O \frac{\vphi'(|\nabla u^{k-1}|)}{|\nabla u^{k-1}|} |\nabla (u^k-u^{k-1})|^2 \dv{x} \le 0,
\]
and summation over $k=1,2,\dots,L$ and multiplication by $\tau$ prove the estimate. 
\end{proof}

\begin{remark}
The stability estimate implies convergence of Richardson-type fixed-point iterations 
for the solution of the stationary $p$-Laplace problem where the step size $\tau$
acts as a damping parameter. For this purpose a stronger
metric to define the evolution such as a weighted $H^1$ product, which mimics 
the $W^{1,p}$ norm may be employed, instead of the $L^2$ inner product,
which in turn acts as a preconditioner for the nonlinear system of
equations. This is an important application of the semi-implicit scheme.
We refer the reader to~\cite{Bart16} for a related approach to a total variation
regularized problem.
\end{remark}

\section{Error estimates}\label{sec:error}
We derive in this section error estimates for the semi-implicit, regularized
numerical scheme of Algorithm~\ref{alg:simpl} with spatial
discretization for the $p$-Dirichlet energy $E_{p,\veps}$. 
We note that all estimates of Section~\ref{sec:stab}
remain valid if spatial discretization is included. 
In what follows we assume that $\cT_h$ is quasi-uniform and that 
$|\cdot|_\veps$ is the standard regularization of euclidean length. 

\subsection{Total variation flow}
We derive an error estimate for the approximation of the gradient 
flow~\eqref{eq:p_flow} with $p=1$ interpreted as a subgradient flow
by the semi-implicit scheme of Algorithm~\ref{alg:simpl}. For this, we compare the iterates 
$(u_h^k)_{k=0,\dots,K} \subset X_h$ in the finite element space 
$X_h = V_h$, i.e., defined via 
\[
(d_t u_h^k,v_h) + \big(|\nabla u_h^{k-1}|_\veps^{-1} \nabla u_h^k, \nabla v_h) = 0,
\]
for all $v_h\in X_h$,  to the iterates $(\tu_h^k)_{k=0,\dots,K} \subset X_h$
of the implicit scheme, i.e., defined via
\[
(d_t \tu_h^k,v_h) + \big(|\nabla \tu_h^k|_\veps^{-1} \nabla \tu_h^k, \nabla v_h) = 0,
\]
for all $v_h\in X_h$. We assume that $\tu_h^0 = u_h^0$. 

\begin{proposition}[Error estimate]\label{prop:diff_impl_simpl_tv}
For the differences of the iterates of the implicit
and the semi-implicit numerical schemes we have that 
\[
\max_{k=0,\dots,K} \|\tu_h^k-u_h^k\| \le c_{1,{\rm s}} \tau^{1/2}  h^{-1} \veps^{-1/2},
\]
where $c_{1,{\rm s}}$ is proportional to $T^{1/2} E_{1,\veps}[u_h^0]$.
\end{proposition}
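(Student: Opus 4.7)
The plan is to subtract the implicit and semi-implicit equations at step $k$, test the difference with the error $e_h^k:=\tu_h^k-u_h^k\in X_h$, and split the resulting nonlinear term into a monotone piece plus a ``semi-implicit lag'' piece whose size is controlled precisely by the dissipation quantity in Proposition~\ref{prop:ener_stab}. Concretely, I would add and subtract $|\nabla u_h^k|_\veps^{-1}\nabla u_h^k$ to write
\[
\frac{\nabla \tu_h^k}{|\nabla \tu_h^k|_\veps}-\frac{\nabla u_h^k}{|\nabla u_h^{k-1}|_\veps}
= \Big(\frac{\nabla \tu_h^k}{|\nabla \tu_h^k|_\veps}-\frac{\nabla u_h^k}{|\nabla u_h^k|_\veps}\Big)
+ \Big(\frac{1}{|\nabla u_h^k|_\veps}-\frac{1}{|\nabla u_h^{k-1}|_\veps}\Big)\nabla u_h^k.
\]
Tested with $\nabla e_h^k=\nabla \tu_h^k-\nabla u_h^k$, the first bracket is nonnegative because $a\mapsto a/|a|_\veps$ is the gradient of the convex function $a\mapsto |a|_\veps$. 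The second bracket, by the discrete quotient rule, the $1$-Lipschitz property $\big||a|_\veps-|b|_\veps\big|\le |a-b|$, and the bound $|\nabla u_h^k|\le |\nabla u_h^k|_\veps$, is pointwise dominated by $\tau\, |d_t \nabla u_h^k|/|\nabla u_h^{k-1}|_\veps$.

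Combining with identity \eqref{eq:prod} applied to $e_h^k$, dropping the monotone piece and the (nonnegative) time-dissipation $\frac{\tau}{2}\|d_t e_h^k\|^2$, multiplying by $\tau$, and telescoping with $e_h^0=0$, I would obtain
\[
\tfrac12 \|e_h^L\|^2 \le \tau^2 \sum_{k=1}^L \int_\O \frac{|d_t \nabla u_h^k|}{|\nabla u_h^{k-1}|_\veps}\,|\nabla e_h^k|\dv{x}.
\]
A Cauchy--Schwarz inequality in space and in the index~$k$ splits the right-hand side into two factors. The first factor is bounded via Proposition~\ref{prop:ener_stab}, whose dissipation term yields $\tau^2 \sum_k \int_\O |d_t \nabla u_h^k|^2/|\nabla u_h^{k-1}|_\veps \le 2 E_{1,\veps}[u_h^0]$. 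The second factor is estimated by the crude weight bound $|\nabla u_h^{k-1}|_\veps^{-1}\le \veps^{-1}$ followed by the inverse estimate $\|\nabla e_h^k\|\le c\, h^{-1}\|e_h^k\|$, valid on quasi-uniform meshes. Finally, bounding $\tau\sum_{k=1}^L \|e_h^k\|^2 \le T \max_{j\le L}\|e_h^j\|^2$ and taking the maximum in $L$ cancels one factor of $\max_k \|e_h^k\|$ and delivers the asserted estimate.

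The main obstacle is choosing the splitting so that the remainder after discarding the monotone piece is \emph{exactly} of the form that the weighted dissipation in Proposition~\ref{prop:ener_stab} can absorb; most naive splittings introduce cross terms like $\nabla(\tu_h^k-u_h^{k-1})$ that mix the two time-stepping sequences and cannot be controlled by quantities intrinsic to either scheme. A second delicate point is the order of estimates around the weight $|\nabla u_h^{k-1}|_\veps^{-1}$: replacing it by $\veps^{-1}$ before applying Cauchy--Schwarz would either sacrifice the stability bound or force a Gronwall closure whose exponential factor $\exp(cT/(\veps h^2))$ is qualitatively worse than the algebraic dependence $\tau^{1/2} h^{-1} \veps^{-1/2}$ claimed.
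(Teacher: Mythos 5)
Your proposal is correct and follows essentially the same route as the paper: the identical splitting into a monotone piece plus the lag term $\big(|\nabla u_h^k|_\veps^{-1}-|\nabla u_h^{k-1}|_\veps^{-1}\big)\nabla u_h^k$, the same pointwise bound $\tau\,|d_t\nabla u_h^k|/|\nabla u_h^{k-1}|_\veps$ via the discrete quotient rule and $1$-Lipschitz continuity, and the same weighted Cauchy--Schwarz whose first factor is absorbed by the dissipation term of Proposition~\ref{prop:ener_stab} and whose second factor is handled by $|\nabla u_h^{k-1}|_\veps\ge\veps$ together with an inverse estimate. Your closing remark about keeping the weight $|\nabla u_h^{k-1}|_\veps^{-1}$ through the Cauchy--Schwarz step, rather than replacing it by $\veps^{-1}$ prematurely, is precisely the point on which the paper's argument also hinges.
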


\begin{proof}
Throughout this proof we omit subscripts~$h$. Taking the difference of the 
numerical schemes we find that $\d^k =\tu^k-u^k$ satisfies 
\[
(d_t \d^k, v) 
+ \Big(\frac{\nabla \tu^k}{|\nabla \tu^k|_\veps} - 
\frac{\nabla u^k}{|\nabla u^{k-1}|_\veps},\nabla v\Big) = 0,
\]
for all $v\in X_h$. Using monotonicity of $a \mapsto a/|a|_\veps$ 
and 1-Lipschitz continuity of $a\mapsto |a|_\veps$, i.e.,
$|d_t|\nabla u^k|_\veps| \le |d_t \nabla u^k|$, 
for $v=\d^k$ we deduce that
\begin{equation}\label{eq:error_eq}
\begin{split}
\frac12 d_t \|\d^k\|^2 + \frac{\tau}{2} \|d_t \d^k\|^2 
&\le - \Big(\frac{\nabla u^k}{|\nabla u^k|_\veps}
- \frac{\nabla u^k}{|\nabla u^{k-1}|_\veps},\nabla \d^k \Big) \\
&= \tau \Big(\frac{\nabla u^k d_t |\nabla u^k|_\veps}
{|\nabla u^k|_\veps|\nabla u^{k-1}|_\veps},\nabla \d^k \Big) \\
&\le \tau \Big(\int_\O \frac{|\nabla d_t u^k|^2}
{|\nabla u^{k-1}|_\veps} \dv{x}\Big)^{1/2}
\Big(\int_\O \frac{|\nabla \d^k|^2}{|\nabla u^{k-1}|_\veps} \dv{x}\Big)^{1/2}.
\end{split}
\end{equation}
Invoking an inverse estimate and $|\nabla u^{k-1}|_\veps \ge \veps$ we infer that
\[
\int_\O \frac{|\nabla \d^k|^2}{|\nabla u^{k-1}|_\veps} \dv{x} 
\le c \veps^{-1} h^{-2} \|\d^k\|^2.
\]
Let $1\le L \le K$ be such that $\|\d^L\| = \max_{k=1,\dots,K} \|\d^k\|$. 
Multiplying~\eqref{eq:error_eq} by $\tau$ and summing over $k=1,2,\dots,L$ shows that  
\[\begin{split}
\|\d^L\|^2 & \le c \tau^2 h^{-1} \veps^{-1/2} \sum_{k=1}^L 
\Big(\int_\O \frac{|\nabla d_t u^k|^2}{|\nabla u^{k-1}|_\veps} 
\dv{x}\Big)^{1/2} \|\d^k\| \\
& \le c \tau^{1/2} h^{-1} \veps^{-1/2}
\Big(\tau^2 \sum_{k=1}^L \int_\O \frac{|\nabla d_t u^k|^2}
{|\nabla u^{k-1}|_\veps} \dv{x}\Big)^{1/2}
\Big(\tau \sum_{k=1}^L \|\d^k\|^2\Big)^{1/2} \\
&\le  c \tau^{1/2} h^{-1} \veps^{-1/2} C_0 (L \tau)^{1/2} \|\d^L\|,
\end{split}\]
where we incorporated the estimate of Proposition~\ref{prop:ener_stab}
with $C_0 = E_{1,\veps}[u^0]$ and $\vphi(r) = |r|_\veps$ so that 
$\vphi'(r)/r = |r|_\veps^{-1}$. Dividing by $\|\d^L\|$ and noting
$L\tau \le T$ implies the asserted estimate. 
\end{proof}

\begin{remark}
In~\cite{FieVee03} a precise characterization of the monotonicity
of the regularized 1-Laplace operator is provided, i.e., we have 
\[
\Big(\frac{a}{|a|_\veps} - \frac{b}{|b|_\veps}\Big) \cdot (a-b) 
= \Big|\frac{(a,\veps)}{|a|_\veps} - \frac{(b,\veps)}{|b|_\veps} \Big|^2
\frac{|a|_\veps + |b|_\veps}{2}.
\]
Unfortunately, we did not succeed in deriving a sharper error estimate making 
use of the identity.
% This may avoid the use of an inverse estimate for $\nabla \d_h^k$ in
% the proof of Proposition~\ref{prop:diff_impl_simpl_tv} but 
% does not seem to lead to a better error estimate. 
\end{remark}

An error estimate follows from combining Proposition~\ref{prop:diff_impl_simpl_tv} 
with the error estimate~\eqref{eq:p_flow_est} for the implicit scheme
from~\cite{BaNoSa14,BaNoSa15}.

\begin{corollary}\label{thm:tv_flow_simpl}
Let $\O$ be star-shaped and $u^0\in BV(\O)\cap L^\infty(\O)$. Assume that
$\cT_h$ is quasi-uniform and $u_h^0 \in V_h$ is such that $|Du_h^0|(\O) \le c|Du^0|(\O)$. 
If~$u$ solves~\eqref{eq:p_flow} with $p=1$ then we have for the iterates
$(u_h^k)_{k=0,\dots,K}$ of Algorithm~\ref{alg:simpl} with $\vphi(r) = |r|_\veps$ 
and the standard regularization $|\cdot|_\veps$ that 
\[
\max_{k=0,\dots,K} \|u(t_k)-u_h^k\| 
\le c_{{\rm isf}} \tau^{1/2} + 2(c_{1,{\rm r}} T)^{1/2} \veps^{1/2} + c_{1,{\rm i}} h^{1/6}
+ c_{1,{\rm s}} \tau^{1/2}  h^{-1} \veps^{-1/2}.
\]
The factor $\tau^{1/2}$ in the first term can be replaced by $\tau$ if
$\p E_{1,0}[u^0] \neq \emptyset$. The factor $h^{1/6}$ in the
third term can be replaced by $h^{1/4}$ for special uniform cartesian
meshes and definition of total variation using the $\ell^1$-norm in $\mathbb{R}^d$.
\end{corollary}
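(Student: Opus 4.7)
The plan is to prove this corollary by a straightforward triangle inequality, splitting the total error into the error of the fully discrete implicit scheme plus the discrepancy between the implicit and semi-implicit iterates, both of which have already been controlled in the excerpt. Concretely, for each $k=0,\dots,K$ I would write
\[
\|u(t_k)-u_h^k\| \le \|u(t_k)-\tu_h^k\| + \|\tu_h^k-u_h^k\|,
\]
where $\tu_h^k$ denotes the iterate of the spatially discretized implicit scheme starting from the same initial value $u_h^0$.

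First, I would bound the first summand by invoking estimate~\eqref{eq:p_flow_est} applied in the case $p=1$. Under the hypotheses that $\O$ is star-shaped, $u^0\in BV(\O)\cap L^\infty(\O)$, $\cT_h$ is quasi-uniform, and $|Du_h^0|(\O)\le c|Du^0|(\O)$, the cited references~\cite{BaNoSa14,BaNoSa15,Bart15-book} yield
\[
\max_{k=0,\dots,K} \|u(t_k)-\tu_h^k\| \le c_{\rm isf}\tau^{1/2} + 2(c_{1,{\rm r}}T)^{1/2}\veps^{1/2} + c_{1,{\rm i}} h^{1/6}.
\]
Second, I would bound the second summand by directly applying Proposition~\ref{prop:diff_impl_simpl_tv}, which is precisely tailored to this comparison and yields
\[
\max_{k=0,\dots,K}\|\tu_h^k-u_h^k\|\le c_{1,{\rm s}}\tau^{1/2}h^{-1}\veps^{-1/2}.
\]
Summing the two bounds gives the asserted estimate.

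The two refinements listed in the statement then follow without extra work. The replacement of $\tau^{1/2}$ by $\tau$ under the stronger regularity assumption $\p E_{1,0}[u^0]\neq \emptyset$ is the improvement $\a=1/2 \to \a=1$ in the general subgradient-flow error estimate from~\cite{Rull96,NoSaVe00}, which is the source of the first term and inherited via \eqref{eq:p_flow_est}. The improvement of $h^{1/6}$ to $h^{1/4}$ on special cartesian meshes comes from using the total variation diminishing interpolation operator constructed in~\cite{BaNoSa15} together with the $\ell^1$-based definition of the total variation, and likewise enters through the spatial error term of the implicit scheme.

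Since the two ingredients are already in place, there is no real obstacle here: the only point requiring a remark is the consistency of the two estimates, namely that both $\tu_h^k$ and $u_h^k$ are initialized with the same $u_h^0$ satisfying $|Du_h^0|(\O)\le c|Du^0|(\O)$, so that the energy control $E_{1,\veps}[u_h^0]$ appearing in the constant $c_{1,{\rm s}}$ of Proposition~\ref{prop:diff_impl_simpl_tv} is bounded in terms of data of the continuous problem. This justifies absorbing $E_{1,\veps}[u_h^0]$ into a problem-independent constant, and the proof is complete.
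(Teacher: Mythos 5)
Your proposal is correct and follows exactly the route the paper intends: the corollary is stated immediately after the remark that it ``follows from combining Proposition~\ref{prop:diff_impl_simpl_tv} with the error estimate~\eqref{eq:p_flow_est}'', i.e.\ the same triangle-inequality splitting into implicit-scheme error plus implicit/semi-implicit discrepancy, with the two refinements inherited from the cited sources. Your added remark on the common initialization $u_h^0$ and the boundedness of $E_{1,\veps}[u_h^0]$ is a sensible point of care but not a deviation from the paper's argument.
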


\subsection{$p$-Laplace gradient flow}
In case $p>1$ a stronger estimate follows from the
strong monotonicity of the $p$-Laplace operator. We argue as
in the previous subsection and compare the finite element
iterates of the semi-implicit scheme defined via
\[
(d_t u_h^k, v_h) + \Big(\frac{\vphi'(|\nabla u_h^{k-1}|)}{|\nabla u_h^{k-1}|} \nabla u_h^k, \nabla v_h\Big) = 0,
\]
for all $v_h\in X_h$, to those of the implicit scheme 
\[
(d_t \tu^k,v_h) + \Big(\frac{\vphi'(|\nabla \tu_h^k|)}{|\nabla \tu_h^k|} \nabla \tu_h^k, \nabla v_h\Big) = 0,
\]
for all $v_h\in X_h$, where we assume that $u_h^0 = \tu_h^0$. To 
simplify our calculations, we define the operator $A:\R^d\to \R^d$ via 
\[
A(a) = \frac{\vphi'(|a|)}{|a|} a, 
\]
and the function $\vphi_\a:[0,\infty)\to [0,\infty)$ given for
$\a,s\ge 0$ by $\vphi_\a(0)=0$ and 
\[
\vphi_\a'(s) = \frac{\vphi'(\a+ s)}{\a + s} s.
\]
We also use the notation $a\lesssim b$ if there exists a constant $c>0$ such
that $a\le cb$; we write $a\eqsim b$ if $a\lesssim b$ and $b\lesssim a$. 
%We assume that $\phi$ is an $N$-function which satisfies the $\Delta_2$ and
%$\nabla_2$ conditions. 
We assume further properties of $\vphi$.

\medskip

{\bf Condition} (C3){\bf .} The function $\vphi \in C(\R_{\ge 0}) \cap  C^2(\R_{>0})$ is 
convex and positive on $(0,\infty)$,
satisfies $\vphi(0)=0$, and $\lim_{s\to 0} \vphi(s)/s = 0$ and 
$\lim_{s\to \infty} \vphi(s)/s=\infty$; moreover $\vphi$ and its convex
conjugate $\vphi^*$ satisfy $\vphi(2s)\lesssim \vphi(s)$ and
$\vphi^*(2r)\lesssim \vphi(r)$ 
for all $r,s \in \R_{\ge 0}$; additionally we have $\vphi''(s) s \eqsim \vphi'(s)$. 

\medskip

The functions defined in Example~\ref{ex:orlicz_fns} satisfy~(C3) for $p>1$ 
with constants that deteriorate as $p\searrow 1$. 

\begin{lemma}\label{la:mon_props_b}
If $\vphi$ satisfies~(C3), then the following statements are valid. \\
(i) For all $a,b\in \R^d$ we have 
\begin{align}
\big(A(a)-A(b)\big)\cdot (a-b) & \eqsim  \vphi_{|a|} (|a-b|), \label{eq:op_A_mon} \\
\big|A(a)-A(b)\big| &\lesssim \vphi_{|a|}'(|a-b|), \label{eq:op_A_cont}
\end{align}
and
\begin{equation}\label{eq:sim_phi}
\vphi_{|a|} (|a-b|) \eqsim \frac{\vphi'(|a|+|b|)}{|a|+|b|} |a-b|^2.
\end{equation}
(ii) For all $\a,r,s \ge 0$ and $\d>0$ we have
\begin{equation}\label{eq:young_phi}
\vphi_\a'(r) s \le c_\d \vphi_\a(r) + \d \vphi_\a(s).
\end{equation}
\end{lemma}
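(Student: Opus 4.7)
My plan is to treat both parts as consequences of the standard theory of shifted N-functions: condition (C3) implies that $\vphi_\a$ is itself an N-function whose $\Delta_2$-constants, together with those of its convex conjugate $\vphi_\a^*$, are bounded uniformly in the shift parameter $\a\ge 0$. Everything else reduces to elementary pointwise bounds along segments, and the first step would be to verify this uniformity.

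The key pointwise equivalence is \eqref{eq:sim_phi}. Starting from $\vphi_\a(s)=\int_0^s \vphi'(\a+t)\,t/(\a+t)\,\dv{t}$, I would split the integral at $t=\min(s,\a)$. On $[0,\min(s,\a)]$ the factor $\vphi'(\a+t)/(\a+t)$ is, by $\vphi''(r)r\eqsim \vphi'(r)$ and $\Delta_2$, equivalent to $\vphi'(\a)/\a$, yielding a contribution of order $\vphi'(\a)\min(s,\a)^2/\a$. When $s>\a$ the remaining piece satisfies $\int_\a^s \vphi'(\a+t)\,t/(\a+t)\dv{t}\eqsim \int_\a^s \vphi'(\a+t)\dv{t}\eqsim \vphi(s)$ since $t/(\a+t)\eqsim 1$ there. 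Assembling the two cases and using $\vphi'(r)r\eqsim \vphi(r)$ gives $\vphi_\a(s)\eqsim \vphi'(\a+s)s^2/(\a+s)$; setting $\a=|a|$, $s=|a-b|$, and noting $|a|+|a-b|\eqsim |a|+|b|$ produces \eqref{eq:sim_phi}.

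For \eqref{eq:op_A_mon} and \eqref{eq:op_A_cont}, I would differentiate $A$ to obtain
\[
DA(a)=\frac{\vphi'(|a|)}{|a|}\Big(\Id-\frac{a\otimes a}{|a|^2}\Big)+\vphi''(|a|)\,\frac{a\otimes a}{|a|^2},
\]
whose eigenvalues $\vphi'(|a|)/|a|$ and $\vphi''(|a|)$ are both $\eqsim \vphi'(|a|)/|a|$ by (C3). Writing $A(a)-A(b)=\int_0^1 DA(b+t(a-b))(a-b)\dv{t}$ and invoking the equivalence of $\vphi'(|\cdot|)/|\cdot|$ along the segment with $\vphi'(|a|+|b|)/(|a|+|b|)$, I obtain both the two-sided bound on $(A(a)-A(b))\cdot (a-b)$ and the one-sided bound on $|A(a)-A(b)|$; in both cases \eqref{eq:sim_phi} (together with $\vphi_{|a|}'(|a-b|)\eqsim \vphi_{|a|}(|a-b|)/|a-b|$) identifies the right-hand side as $\vphi_{|a|}(|a-b|)$ or $\vphi_{|a|}'(|a-b|)$, respectively.

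For part (ii) I would combine the classical Young inequality $rs\le \vphi_\a^*(r)+\vphi_\a(s)$ applied to the product $\vphi_\a'(r)\cdot s$ with the Legendre--Fenchel identity $\vphi_\a^*(\vphi_\a'(r))\eqsim \vphi_\a(r)$, which is itself a consequence of $\vphi_\a''(s)s\eqsim \vphi_\a'(s)$. A standard $\d$-scaling trick, using the uniform $\Delta_2$-property of both $\vphi_\a$ and $\vphi_\a^*$ to absorb the scaling constants into $c_\d$, then produces \eqref{eq:young_phi}. The main obstacle throughout is the uniform-in-$\a$ bookkeeping of the doubling constants---it is easy to lose track of their dependence on $\a$ when passing between $\vphi$, $\vphi'$, $\vphi''$ evaluated at shifted arguments---but once the split-integral bound for $\vphi_\a$ is in hand, no deeper idea than convex duality and the mean-value theorem is required.
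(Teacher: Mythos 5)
The paper does not actually prove Lemma~\ref{la:mon_props_b}: its ``proof'' is a one-line citation to the shifted-N-function estimates of Diening--Ettwein \cite{DieEtt08}. What you have written is essentially a reconstruction of the arguments from that reference, and the overall architecture is sound: the split of $\vphi_\a(s)=\int_0^s \vphi'(\a+t)t/(\a+t)\,\dv{t}$ at $t=\min(s,\a)$ does yield $\vphi_\a(s)\eqsim \vphi'(\a+s)s^2/(\a+s)$ and hence \eqref{eq:sim_phi} (after the easy observation $|a|+|a-b|\eqsim|a|+|b|$); the eigenvalue computation for $DA$ together with $\vphi''(r)r\eqsim\vphi'(r)$ is the standard route to \eqref{eq:op_A_mon}--\eqref{eq:op_A_cont}; and \eqref{eq:young_phi} follows from Young's inequality for $\vphi_\a$, the identity $\vphi_\a^*(\vphi_\a'(r))\le \vphi_\a'(r)r\eqsim\vphi_\a(r)$, and the uniform $\Delta_2$ constants of $\vphi_\a$ and $\vphi_\a^*$, which you correctly identify as the point requiring bookkeeping.

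One step is stated too loosely and hides the only genuinely delicate lemma. In the proof of \eqref{eq:op_A_mon}--\eqref{eq:op_A_cont} you invoke ``the equivalence of $\vphi'(|\cdot|)/|\cdot|$ along the segment with $\vphi'(|a|+|b|)/(|a|+|b|)$.'' This is false pointwise: for sub-quadratic $\vphi$ the weight $\vphi'(r)/r$ blows up as $r\to0$, and the segment $[b,a]$ may pass through or arbitrarily close to the origin (take $b=-a$). What is true, and what the argument actually needs, is the integral equivalence
\[
\int_0^1 \frac{\vphi'(|b+t(a-b)|)}{|b+t(a-b)|}\,\dv{t}\;\eqsim\;\frac{\vphi'(|a|+|b|)}{|a|+|b|},
\]
which is a separate lemma in \cite{DieEtt08} (proved by distinguishing the cases $|a-b|\le\tfrac12\max\{|a|,|b|\}$, where the pointwise equivalence does hold along the segment, and $|a-b|>\tfrac12\max\{|a|,|b|\}$, where one integrates the singularity explicitly using $\vphi''(r)r\eqsim\vphi'(r)$ to control $\vphi'(r)/r$ near $r=0$). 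You should either prove this integral estimate or cite it explicitly; as written, the phrase suggests a pointwise bound that does not exist. With that repaired, your sketch is a complete and self-contained substitute for the paper's citation.
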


\begin{proof}
We refer the reader to~\cite{DieEtt08} for proofs of the estimates. 
\end{proof}

The relations of Lemma~\ref{la:mon_props_b} lead to the following result. 

\begin{proposition}[Error estimate]\label{prop:diff_impl_simpl_p}
Suppose that $\vphi$ satisfies (C1)-(C3) and that there exist constants 
$c_1,c_2>0$ such that 
\begin{equation}\label{vphi'(s)/s}
c_1 \max\{s,\veps\}^{p-2} \le \frac{\vphi'(s)}{s} \le c_2 \, \veps^{p-2}
\end{equation}
for all $s\ge 0$. Assume further that $\cT_h$ is quasi-uniform and
there exists $c_\infty>0$ such that 
\begin{equation}\label{max-norm-bound}
\max_{k=0,\dots,K} \|u_h^k\|_{L^\infty(\O)} 
+  \max_{k=0,\dots,K} \|\tu_h^k\|_{L^\infty(\O)} \le c_\infty.
\end{equation}
Then, for the differences of the iterates of the implicit
and the semi-implicit numerical schemes we have that 
\[
\max_{k=0,\dots,K} \|\tu_h^k-u_h^k\| \le c_{p,{\rm s}} \tau^{1/2}  (h\veps)^{(p-2)/2},
\]
where $c_{p,{\rm s}}$ is proportional to $\big(E_{\vphi}[u_h^0]\big)^{1/2}$.
\end{proposition}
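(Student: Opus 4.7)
The plan is to mimic the argument of Proposition~\ref{prop:diff_impl_simpl_tv}, but to replace the weak monotonicity of the $1$-Laplace operator by the strong monotonicity of $A$ supplied by Lemma~\ref{la:mon_props_b}. Omitting the subscript $h$, set $\d^k = \tu^k-u^k$, subtract the two finite element schemes, and add and subtract $A(\nabla u^k)$ in the nonlinear term so that one summand becomes the genuine monotone difference $A(\nabla \tu^k)-A(\nabla u^k)$, while the other encodes the consistency defect $A(\nabla u^k)-\frac{\vphi'(|\nabla u^{k-1}|)}{|\nabla u^{k-1}|}\nabla u^k$ between the implicit and semi-implicit discretisations. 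Testing with $v_h=\d^k$ and using identity~\eqref{eq:prod} yields
\[
\tfrac12 d_t\|\d^k\|^2 + \tfrac{\tau}{2}\|d_t \d^k\|^2 + I_k = -J_k,
\]
with $I_k = (A(\nabla \tu^k)-A(\nabla u^k),\nabla \d^k)$ and $J_k$ the inner product of the consistency defect with $\nabla \d^k$.

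For $I_k$ I would combine~\eqref{eq:op_A_mon} and~\eqref{eq:sim_phi} with the lower bound in~\eqref{vphi'(s)/s}, the $L^\infty$ bound~\eqref{max-norm-bound}, and an inverse estimate to deduce
\[
I_k \gtrsim \int_\O \frac{\vphi'(|\nabla \tu^k|+|\nabla u^k|)}{|\nabla \tu^k|+|\nabla u^k|} |\nabla \d^k|^2\dv x \gtrsim h^{2-p}\|\nabla \d^k\|^2,
\]
which furnishes the absorption capacity that replaces the purely $\veps$-based bound of the $p=1$ case. To bound $J_k$, I would split the defect as $\big(A(\nabla u^k)-A(\nabla u^{k-1})\big) + \frac{\vphi'(|\nabla u^{k-1}|)}{|\nabla u^{k-1}|}(\nabla u^{k-1}-\nabla u^k)$. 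The first piece is controlled by~\eqref{eq:op_A_cont} together with the Orlicz Young inequality~\eqref{eq:young_phi} at some parameter $\eta$, producing an absorbable term $\eta\,\vphi_{|\nabla u^k|}(|\nabla \d^k|)$ and a remainder $c_\eta\,\vphi_{|\nabla u^k|}(|\nabla u^k-\nabla u^{k-1}|)$. The second piece is estimated via the upper bound $\vphi'(s)/s\le c_2\veps^{p-2}$ and a standard Young inequality with the same $\eta$, producing $\eta c_2\veps^{p-2}|\nabla \d^k|^2$ (which is absorbed into $I_k$ provided $\eta\lesssim(h\veps)^{2-p}$) and a remainder of the form $(4\eta)^{-1}c_2\veps^{p-2}|\nabla u^k-\nabla u^{k-1}|^2$.

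With the balanced choice $\eta\sim(h\veps)^{2-p}$, multiplying by $\tau$ and summing over $k=1,\dots,L$ reduces matters to controlling the two remainders, both of which are weighted quadratic terms of the form $\tau^2\sum_k \int_\O w_k\, |d_t \nabla u^k|^2\dv x$. Using~\eqref{eq:sim_phi} and the fact that $\vphi'(s)/s$ is nonincreasing, each weight $w_k$ is dominated by the Proposition~\ref{prop:ener_stab} weight $\vphi'(|\nabla u^{k-1}|)/|\nabla u^{k-1}|$, so Proposition~\ref{prop:ener_stab} controls each remainder by a multiple of $\tau E_\vphi[u^0]/\eta \sim \tau(h\veps)^{p-2} E_\vphi[u^0]$. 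Choosing $L$ so that $\|\d^L\|=\max_k\|\d^k\|$, discarding the nonnegative $\tfrac{\tau^2}{2}\|d_t\d^k\|^2$ term on the left, and using $\d^0=0$ then gives the claimed bound. The delicate point is the calibration of $\eta$: the strong-monotonicity lower bound on $I_k$ is not uniformly elliptic because the $L^\infty$ plus inverse estimate forces the factor $h^{2-p}$, so $\eta$ must be tuned to trade this absorption against the $\eta^{-1}$ prefactor that will multiply the energy stability estimate.
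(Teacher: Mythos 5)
Your overall architecture coincides with the paper's: the same splitting of the consistency defect into $A(\nabla u^k)-A(\nabla u^{k-1})$ plus $\frac{\vphi'(|\nabla u^{k-1}|)}{|\nabla u^{k-1}|}\nabla[u^{k-1}-u^k]$, the same use of \eqref{eq:op_A_cont}, \eqref{eq:young_phi} and \eqref{eq:sim_phi} for the first piece, and the same final appeal to Proposition~\ref{prop:ener_stab}. The gap is in the second piece. You assert that the remainder weight $c_2\veps^{p-2}$ ``is dominated by the Proposition~\ref{prop:ener_stab} weight $\vphi'(|\nabla u^{k-1}|)/|\nabla u^{k-1}|$''; this is backwards. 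By \eqref{vphi'(s)/s}, $\veps^{p-2}$ is (up to $c_2$) an \emph{upper} bound for $\vphi'(s)/s$, and the available lower bound $c_1\max\{s,\veps\}^{p-2}$ lies far below $\veps^{p-2}$ wherever $|\nabla u^{k-1}|\gg\veps$. To convert $\veps^{p-2}|d_t\nabla u^k|^2$ into the dissipation integrand you must spend the inverse estimate $|\nabla u^{k-1}|\lesssim c_\infty h^{-1}$, which only gives $\veps^{p-2}\lesssim(h\veps)^{p-2}\,\vphi'(|\nabla u^{k-1}|)/|\nabla u^{k-1}|$ and hence costs an extra factor $(h\veps)^{p-2}\ge 1$. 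Combined with the calibration $\eta\sim(h\veps)^{2-p}$ that your absorption into $I_k\gtrsim h^{2-p}\|\nabla\d^k\|^2$ forces, the second remainder sums to $\lesssim\tau(h\veps)^{2(p-2)}E_\vphi[u_h^0]$, so your argument delivers only $\max_k\|\d^k\|\lesssim\tau^{1/2}(h\veps)^{p-2}$, weaker than the assertion by the factor $(h\veps)^{(p-2)/2}\ge1$. No retuning of $\eta$ repairs this: absorption requires $\eta\lesssim(h\veps)^{2-p}$ while the target rate would need $\eta\gtrsim\veps^{p-2}$, and these are incompatible unless $\veps^2h\gtrsim1$.

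The paper pays the factor $(h\veps)^{p-2}$ only once by using a \emph{weighted} Young inequality and by not degrading the monotonicity term to $h^{2-p}\|\nabla\d^k\|^2$. With $W=\frac{\vphi'(|\nabla u^k|+|\nabla \tu^k|)}{|\nabla u^k|+|\nabla \tu^k|}$ one bounds the integrand of $R_2$ by
\[
\d\, W|\nabla\d^k|^2
+ c_\d\, \Big(W^{-1}\tfrac{\vphi'(|\nabla u^{k-1}|)}{|\nabla u^{k-1}|}\Big)\,
\tfrac{\vphi'(|\nabla u^{k-1}|)}{|\nabla u^{k-1}|}\,|\nabla[u^{k-1}-u^k]|^2 ,
\]
where by \eqref{eq:sim_phi} the first term is $\eqsim\d\,\vphi_{|\nabla u^k|}(|\nabla\d^k|)$ and is absorbed into the coercivity supplied by \eqref{eq:op_A_mon} with a \emph{fixed} small $\d$ --- no inverse estimate is consumed in the absorption. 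The inverse estimate, \eqref{max-norm-bound} and \eqref{vphi'(s)/s} are then applied a single time to the $L^\infty$ norm of the ratio, $\|W^{-1}\vphi'(|\nabla u^{k-1}|)/|\nabla u^{k-1}|\|_{L^\infty(\O)}\lesssim h^{p-2}\veps^{p-2}$, leaving a remainder $c_\d(h\veps)^{p-2}\tau^2\int_\O\frac{\vphi'(|\nabla u^{k-1}|)}{|\nabla u^{k-1}|}|d_t\nabla u^k|^2\dv{x}$ that Proposition~\ref{prop:ener_stab} controls with exactly one factor $(h\veps)^{p-2}$, yielding the stated rate $\tau^{1/2}(h\veps)^{(p-2)/2}$. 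Your treatment of $R_1$ is essentially the paper's, except that there too the parameter in \eqref{eq:young_phi} should be kept fixed rather than tied to $\eta$.
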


\begin{proof}
We omit the subscripts $h$ in what follows. To derive an estimate 
for $\d^k = u^k - \tu^k$ we test the difference of the equations that define $u^k$ and $\tu^k$
with $\d^k$ and use~\eqref{eq:prod} in conjunction with
\eqref{eq:op_A_mon} to verify that 
\[\begin{split}
\frac{d_t}{2} & \|\d^k\|^2 + \frac{\tau}{2} \|d_t \d^k\|^2 + \int_\O \vphi_{|\nabla u^k|} (|\nabla \d^k|) \dv{x} \\
&\lesssim (d_t \d^k,\d^k) + \big(A(\nabla u^k)-A(\nabla \tu^k),\nabla\d^k\big) \\
&= \big(A(\nabla u^k) -A(\nabla u^{k-1}), \nabla \d^k\big) 
+\Big(\frac{\vphi'(|\nabla u^{k-1}|)}{|\nabla u^{k-1}|} \nabla [u^{k-1}-u^k],\nabla \d^k\Big) \\&  = R_1 + R_2. 
\end{split}\]
To bound $R_1$ we use that~\eqref{eq:op_A_cont} 
and~\eqref{eq:young_phi} imply that
\[\begin{split}
R_1 &\eqsim \int_\O \vphi_{|\nabla u^k|}'(|\nabla [u^k-u^{k-1}]|) |\nabla \d^k| \dv{x} \\
& \le \d \int_\O \vphi_{|\nabla u^k|} (|\nabla \d^k|)\dv{x}  
+ c_\d \int_\O \vphi_{|\nabla u^k|} (|\nabla [u^k-u^{k-1}]|)\dv{x}.
\end{split}\]
Invoking the equivalence~\eqref{eq:sim_phi}, the property
that $s\mapsto \vphi'(s)/s$ is nonincreasing, and the relation $\tau d_t u^k = u^k-u^{k-1}$, 
we deduce that
\[\begin{split}
R_1 &\lesssim \d \int_\O \vphi_{|\nabla u^k|} (|\nabla \d^k|)\dv{x}  
+ c_\d \tau^2 \int_\O \frac{\vphi'(|\nabla u^k| + |\nabla u^{k-1}|)}{|\nabla u^k| + |\nabla u^{k-1}|}
|d_t \nabla u^k|^2 \dv{x}  \\
&\le \d \int_\O \vphi_{|\nabla u^k|} (|\nabla \d^k|)\dv{x}
+  c_\d \tau^2 \int_\O  \frac{\vphi'(|\nabla u^{k-1}|)}{|\nabla u^{k-1}|} |d_t \nabla u^k|^2 \dv{x}.
\end{split}\]
For the term $R_2$ we employ Young's inequality
$st\le \delta s^2 + c_\delta t^2$ to obtain
\[\begin{split}
&R_2 \le \d \int_\O \frac{\vphi'(|\nabla u^k|+|\nabla \tu^k|)}{|\nabla u^k|+|\nabla \tu^k|}
|\nabla \d^k|^2 \dv{x} \\
& + c_\d \tau^2 \Big\|\frac{|\nabla u^k|+|\nabla \tu^k|}{\vphi'(|\nabla u^k|+|\nabla \tu^k|)} 
\frac{\vphi'(|\nabla u^{k-1}|)}{|\nabla u^{k-1}|} \Big\|_{L^\infty(\O)} 
 \int_\O \frac{\vphi'(|\nabla u^{k-1}|)}{|\nabla u^{k-1}|} |d_t \nabla u^k|^2 \dv{x}.
\end{split}\]
Utilizing an inverse estimate in conjunction with
\eqref{max-norm-bound} yields
\[
\|\nabla u^k\|_{L^\infty(\Omega)} +
\|\nabla\tu^k\|_{L^\infty(\Omega)} \lesssim c_\infty h^{-1},
\]
whence \eqref{vphi'(s)/s} gives
\[
\Big\|\frac{|\nabla u^k|+|\nabla \tu^k|}{\vphi'(|\nabla u^k|+|\nabla
  \tu^k|)}\Big\|_{L^\infty(\Omega)} \lesssim c_\infty h^{p-2},
\quad
\Big\|\frac{\vphi'(|\nabla u^{k-1}|)}{|\nabla u^{k-1}|}
\Big\|_{L^\infty(\O)} \lesssim \veps^{p-2}.
\]
In view of~\eqref{eq:sim_phi}, these bounds lead to
\[\begin{split}
R_2 \le \d \int_\O \vphi_{|\nabla u^k|}(|\nabla \d^k|) \dv{x} 
+ c c_\d \tau^2 h^{p-2} \veps^{p-2}  
\int_\O \frac{\vphi'(|\nabla u^{k-1}|)}{|\nabla u^{k-1}|} |d_t \nabla u^k|^2 \dv{x}.
\end{split}\]
Combining the first estimate with those of $R_1$ and $R_2$ we 
obtain the following bound
after summation over $k=1,2,\dots,L$ and multiplication by $\tau$ 
\[\begin{split}
\|\d^L\|^2 + & \tau^2 \sum_{k=1}^L \|d_t \d^k\|^2 
+  \tau \sum_{k=1}^L \int_\O \vphi_{|\nabla u^k|} (|\nabla \d^k|) \dv{x} \\
& \lesssim \tau \big(1 + (h\veps)^{p-2}\big) \tau^2  \sum_{k=1}^L 
\int_\O \frac{\vphi'(|\nabla u^{k-1}|)}{|\nabla u^{k-1}|} |d_t \nabla u^k|^2 \dv{x},
\end{split}\]
where we have also used that $\d^0=0$. The bound of Proposition~\ref{prop:ener_stab} 
for the sum on the right-hand side implies the asserted estimate. 
\end{proof}

A complete error estimate follows from combining Proposition~\ref{prop:diff_impl_simpl_p}
with the error estimate~\eqref{eq:p_flow_est} for the implicit scheme
from~\cite{DiEbRu07}.

\begin{corollary}\label{thm:p_flow_simpl}
Let $\O$ be convex, $X=W^{1,p}_0(\O)$, and let $u^0\in W^{1,2}_0(\O)$ and 
$\diver \big(|\nabla u^0|^{p-2}\nabla u^0\big) \in L^2(\O)$.
Let $\cT_h$ be quasi-uniform, $u_h^0 \in X_h$ be such that 
$\|\nabla u_h^0\|_{L^p(\O)} \le c \|\nabla u^0\|_{L^p(\O)}$, and 
$c_\infty>0$ satisfy 
\[
\max_{k=0,\dots,K} \|u_h^k\|_{L^\infty(\O)} 
+  \max_{k=0,\dots,K} \|\tu_h^k\|_{L^\infty(\O)} \le c_\infty.
\]
If $u$ is the solution of~\eqref{eq:p_flow} with $p\in (1,2)$ and
$(u_h^k)_{k=0,\dots,K}$ are the iterates of Algorithm~\ref{alg:simpl} with 
$\vphi(r) = (|r|_\veps^p-|0|_\veps^p)/p$, then we have
\[
\max_{k=0,\dots,K} \|u(t_k) - u_h^k\| \le c_{{\rm isf}} \tau + c_{p,{\rm i}} h 
+ 2(c_{p,{\rm r}}T)^{1/2} \veps^{p/2} + c_{p,{\rm s}} \tau^{1/2} (h\veps)^{(p-2)/2}.
\]
\end{corollary}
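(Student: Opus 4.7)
The plan is a triangle inequality decomposition,
\[
\|u(t_k) - u_h^k\| \le \|u(t_k) - \tu_h^k\| + \|\tu_h^k - u_h^k\|,
\]
controlling the first summand by the existing error estimate for the fully implicit finite element scheme and the second summand by Proposition~\ref{prop:diff_impl_simpl_p}. Since both bounds are already established, essentially all that has to be done is to check that the hypotheses of the corollary are sufficient to invoke each of them.

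For the first summand I would verify that the assumptions of~\eqref{eq:p_flow_est} taken from~\cite{DiEbRu07} are satisfied: $\O$ convex, $u^0\in W^{1,2}_0(\O)$ with $\diver(|\nabla u^0|^{p-2}\nabla u^0)\in L^2(\O)$, and a stable approximation $u_h^0$ of $u^0$, all of which are hypotheses of the corollary. The regularity of $u^0$ also places it in the domain of the subdifferential, $\p E_{p,\veps}[u^0]\neq \emptyset$, so as noted after~\eqref{eq:p_flow_est} the implicit-scheme analysis applies with $\a=1$. This yields the contributions $c_{{\rm isf}}\tau$, $2(c_{p,{\rm r}}T)^{1/2}\veps^{p/2}$, and the spatial factor $c_{p,{\rm i}} h$.

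For the second summand I would apply Proposition~\ref{prop:diff_impl_simpl_p} to $\vphi(r) = (|r|_\veps^p-|0|_\veps^p)/p$ with the standard regularization. Conditions (C1) and (C2) are recorded in Example~\ref{ex:orlicz_fns}, and (C3) is noted in the paragraph following its statement for $p>1$. The structural bounds~\eqref{vphi'(s)/s} reduce to the identity
\[
\frac{\vphi'(s)}{s} = (s^2 + \veps^2)^{(p-2)/2},
\]
for which the upper estimate $(s^2+\veps^2)^{(p-2)/2}\le \veps^{p-2}$ is immediate since $p<2$, while $s^2+\veps^2 \le 2\max\{s,\veps\}^2$ gives the matching lower bound by a constant multiple of $\max\{s,\veps\}^{p-2}$. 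Quasi-uniformity of $\cT_h$ and the uniform $L^\infty$ bound~\eqref{max-norm-bound} are part of the hypothesis. Proposition~\ref{prop:diff_impl_simpl_p} then provides the last contribution $c_{p,{\rm s}}\tau^{1/2}(h\veps)^{(p-2)/2}$.

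Adding the two bounds and maximizing over $k$ gives the claimed estimate. I do not anticipate any real obstacle: the substantive work is done in Propositions~\ref{prop:ener_stab} and~\ref{prop:diff_impl_simpl_p} and in the reference~\cite{DiEbRu07}, and the reduction above is essentially mechanical once the one-line verification of~\eqref{vphi'(s)/s} for the standard regularization has been done.
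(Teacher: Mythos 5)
Your proposal takes essentially the same route as the paper, which derives the corollary simply by combining Proposition~\ref{prop:diff_impl_simpl_p} with the implicit-scheme estimate~\eqref{eq:p_flow_est} via the triangle inequality; your verification of (C1)--(C3), of the bound~\eqref{vphi'(s)/s} for the standard regularization, and of the choice $\a=1$ from the regularity of $u^0$ correctly fills in the details the paper leaves implicit. The only caveat is that \eqref{eq:p_flow_est} yields the spatial rate $h^{\gamma}$ with $\gamma=1-d(2-p)/8$ rather than $h$, a discrepancy already present in the paper's own statement of the corollary and not introduced by your argument.
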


Establishing rigorously the $L^\infty$ bounds \eqref{max-norm-bound}
requires further conditions. Such
bounds can be avoided if in the proof of Proposition~\ref{prop:diff_impl_simpl_p}
inverse estimates $\|\nabla v_h\|_{L^\infty(\O)} \le c h^{-d/p} \|\nabla v_h\|_{L^p(\O)}$ 
are used, which leads to a weaker error estimate since $d/p>1$.  

\begin{remark}
The $L^\infty$ bounds \eqref{max-norm-bound}
can be obtained via discrete maximum principles
provided that $u^0\in L^\infty(\O)$. 
For the semi-implicit scheme it is sufficient to guarantee that the system matrix 
in every time step is an $M$-matrix, which holds if quadrature (mass lumping) 
is used, the triangulation is (strongly) acute, and $\tau$ is sufficiently small. 
For the implicit scheme this follows from monotonicity properties
of the minimization problems at each time step, which are available if
quadrature is used and the mesh is acute. 
\end{remark}

\section{Numerical experiments}\label{sec:numex}
We illustrate our theoretical findings by numerical experiments for the
most singular case $p=1$. For this, we construct explicit solutions and
then compare errors for approximations obtained with the implicit 
scheme and the semi-implicit scheme of Algorithm~\ref{alg:simpl} 
and different regularization parameters. 
The nonlinear systems of equations in the time steps of the
implicit scheme were solved with an alternating direction
method of multipliers (ADMM) with variable step sizes proposed and analyzed
in~\cite{BarMil17-pre}. 

\subsection{Explicit solutions}
We consider~\eqref{eq:p_flow} with $p=1$ and Dirichlet boundary conditions, i.e., 
formally, we consider 
\begin{equation}\label{eq:tv_flow_db}
\p_t u = \diver \frac{\nabla u}{|\nabla u|}, \quad 
u(0,\cdot) = u^0, \quad u(t,\cdot)|_\pO = 0.
\end{equation}
Establishing
the existence of solutions subject to Dirichlet boundary 
conditions is a difficult task but the stability and error estimates
remain valid whenever a solution exists. To
construct explicit, nontrivial solutions we use the equivalent
formulation 
\begin{equation}\label{eq:tv_flow_mixed_a}
u_t = \diver p, \quad \nabla u  \in \p I_K(p),
\end{equation}
where $K=\overline{B_1(0)}$. 
The inclusion follows from its equivalence to $p \in \p |\nabla u|$
and means that $p\in L^\infty(\O;\R^d)$ with $|p|\le 1$ satisfies 
\[
(\nabla u, q-p)  \le 0
\]
for all $q\in L^\infty(\O;\R^d)$ with $|q|\le 1$, provided that 
$\nabla u\in L^1(\O;\R^d)$.
For the case that $u\in BV(\O)\cap L^2(\O)$ with $u|_\pO=0$ we may formulate
it as 
\begin{equation}\label{eq:tv_flow_mixed_b}
-\big(u,\diver(q-p)\big) \le 0,
\end{equation}
requiring that $p,q\in H(\diver;\O)$ with $|p|,|q|\le 1$. 
We refer the reader to~\cite{BeCaNo02,BaNoSa14} for further details. 
The following examples use that for regular solutions of~\eqref{eq:tv_flow_db} 
the change of height $\p_t u$ at a noncritical point $x\in \O$  equals 
the negative mean curvature~$-H = \diver(\nabla u/|\nabla u|)$ of the
corresponding level set,
and that jump sets, along which gradients are unbounded, have vanishing
normal velocity $V = \p_t u/|\nabla u| = -H/|\nabla u|$.

\begin{example}[Decreasing disk,~\cite{BeCaNo02}]\label{ex:decr_disk} 
Let $\O\subset \R^d$ such that $B_1(0)\subset \O$ and 
\[
u(t,x) = \max\big\{1-td,0\big\} \chi_{B_1(0)}(x).
\]
Then $u$ solves~\eqref{eq:tv_flow_db} with $u^0 = \chi_{B_1(0)}$. 
\end{example}

\begin{proof}
For $t\le 1/d$ and $x\in \O$ we define
\[
p(t,x) = - 
\begin{cases}
x, & |x|\le 1, \\
x/|x|^d, & |x| \ge 1.
\end{cases}
\]
For $t>1/d$ we set $p(t,x)=0$. We have that $p(t,\cdot)$ is 
continuous in $\O$ with $|p|\le 1$ and $\p_t u = \diver p$ in~$\O$.
To show that $u$ solves~\eqref{eq:tv_flow_mixed_a} it remains to 
verify~\eqref{eq:tv_flow_mixed_b}. For $q\in H(\diver;\O)$ with
$|q|\le 1$ we have 
\[
-\big(u,\diver(q-p)\big) 
= -(1-td) \int_{\p B_1(0)} (q-p)\cdot n \dv{s} \le 0,
\]
since $p=-n$ on $\p B_1(0)$ and $q \cdot n \le 1$. 
\end{proof}

The solution constructed in the second example is Lipschitz
continuous at all times but the discontinuity set of $\nabla u$ 
is nonstationary. Moreover, we have that $\p_t u(0) \not \in L^2$
so that only the suboptimal convergence rate $\cO(\tau^{1/2})$
for the time-discretization error can be expected. 

\begin{example}[Decreasing cone]\label{ex:decr_cone}
Let $\O \subset \R^d$ such that $B_1(0)\subset \O$ and 
\[
u^0(x) =  \max\big\{1-|x|,0\}.
\]
If
\[
s(t) = (d+1)^{1/2} t^{1/2}, \quad 
r(t) = \frac12 \big(1 + (1-4t(d-1)\big)^{1/2}\big),
\]
then for $t\le (d+1)/(4d^2)$ we have 
\[
u(t,x) = 
\begin{cases}
1-s(t)-t(d-1)/s(t), & |x|\le s(t), \\
1-|x|- t(d-1)/|x|, & s(t)\le |x| \le r(t), \\
0, & r(t) \le |x|.
\end{cases}
\]
For $t \ge (d+1)/(4d^2)$, we have $u(t,x)= 0$ for all $x\in \O$. 
\end{example}

\begin{proof}
We first note that for a nondegenerate point $x\in \O$ for
a solution of~\eqref{eq:tv_flow_db} we have
that the mean curvature of its level set equals $(d-1)/|x|$, whence
\[
\p_t u(t,x) = -\frac{d-1}{|x|},
\]
as long as $\nabla u(t,x)\neq 0$; hence, $u(t,x) = 1-|x|- t(d-1)/|x|$. 
To prove that $u$ is a solution of \eqref{eq:tv_flow_mixed_a},
we construct an appropriate vector field $p$. We define
\[
p(t,x) = -
\begin{cases}
x/s(t), & |x| \le s(t), \\
x/|x|, & s(t) \le |x| \le r(t), \\
xr(t)^{d-1}/|x|^d, & r(t)\le |x|,
\end{cases}
\]
and note that $p(t,\cdot)$ is continuous in $\O$ with $|p|\le 1$ and 
\[
\diver p(t,x) = -
\begin{cases}
d/s(t), & |x|< s(t), \\
(d-1)/|x|, & s(t) < |x| < r(t), \\
0, & |x| > r(t) .
\end{cases}
\]
The differential equation  $\p_t u = \diver p$ is obviously 
satisfied for $|x| > s(t)$. For $0\le |x| < s(t)$ we obtain the condition
\[
-s' - \frac{d-1}{s} + \frac{t(d-1)}{s^2}s' = -\frac{d}{s}  \quad \Longleftrightarrow \quad
s' = \frac{s}{s^2-(d-1)t},
\]
which is satisfied by definition of $s$. We finally note 
that, since $u(t,\cdot) \in W^{1,\infty}(\O)$ with $u(t,\cdot)|_{\pO}=0$
and $p= \nabla u/|\nabla u|$ for $s(t)\le |x| \le r(t)$
and $\nabla u = 0$ otherwise, we have 
\[\begin{split}
-\big(u,\diver(q-p)\big) = \int_\O \nabla u \cdot (q-p)\dv{x} 
\le 0,
\end{split}\]
provided that $|q|\le 1$. This proves the statement.
\end{proof}

Snapshots of implicit approximations of the total variation flow
with $\veps=0$ on a triangulation~$\cT_\ell$ of
$\O=(-3/2,3/2)^2$ obtained from $\ell=5$ uniform refinements of an initial 
partitions $\cT_0$ into two triangles and with $\tau = h/4$ are shown 
in Figures~\ref{fig:decr_disk} and~\ref{fig:decr_cone}.

%%%%%%%%%%%%%%%%%%%%%%%%%%%%%%%%%%%%%%%%%%%%%%%%%%%%%%%%%%%%%%%%%%%%%%%%%%%%%%%%%%

\begin{figure}[p]
\includegraphics[width=.45\linewidth]{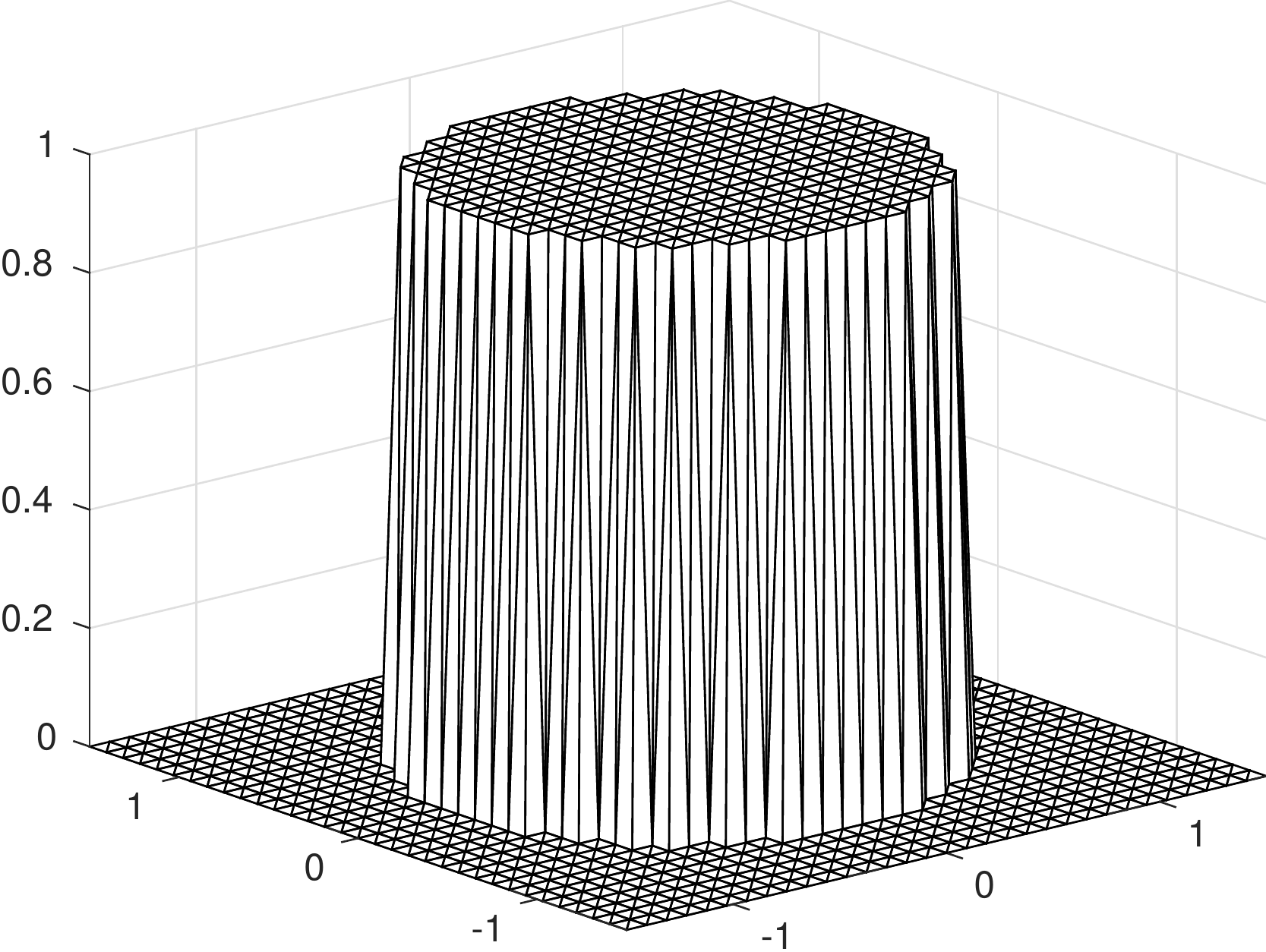} \hspace*{2mm}
\includegraphics[width=.45\linewidth]{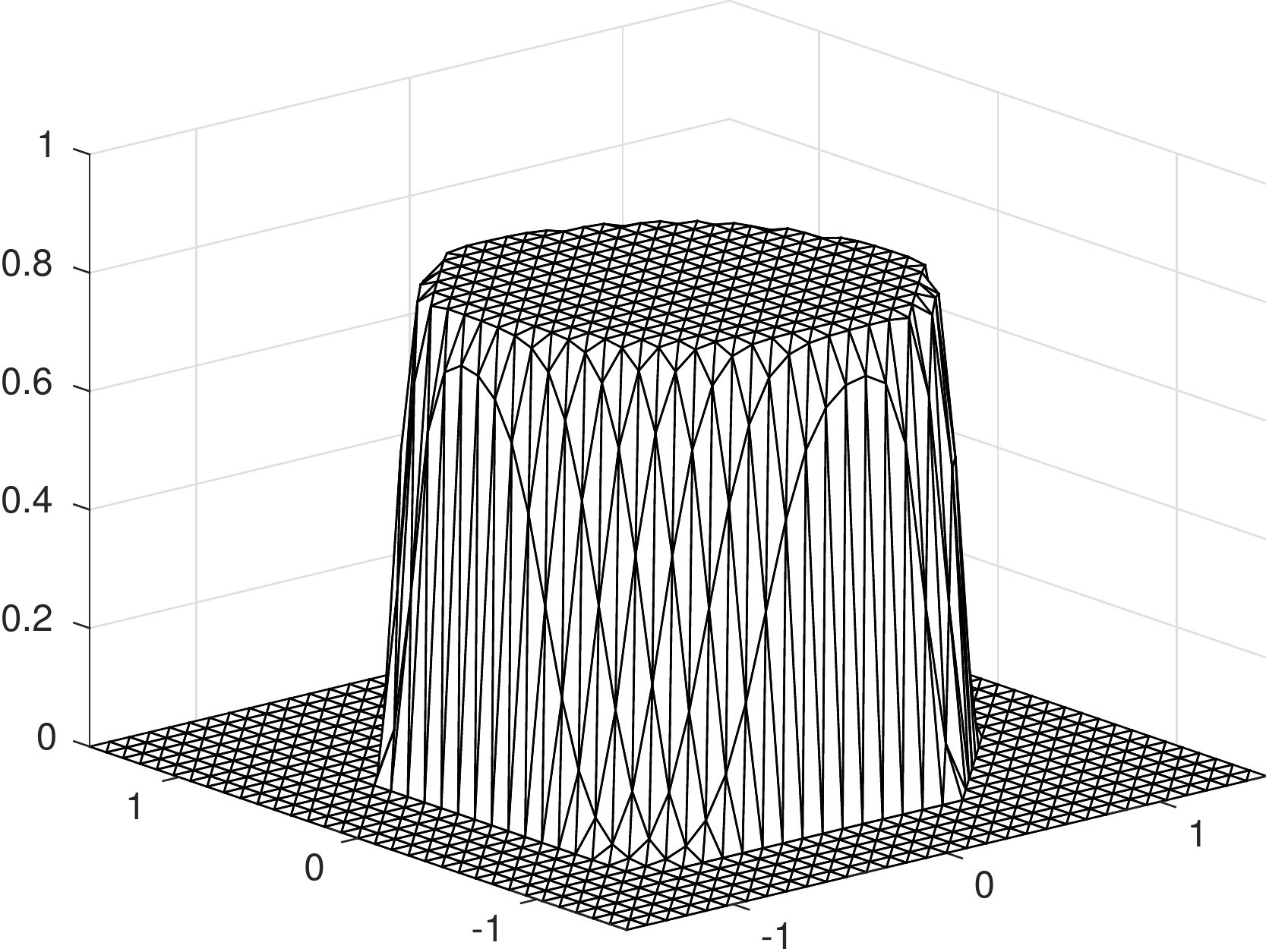} \\
\includegraphics[width=.45\linewidth]{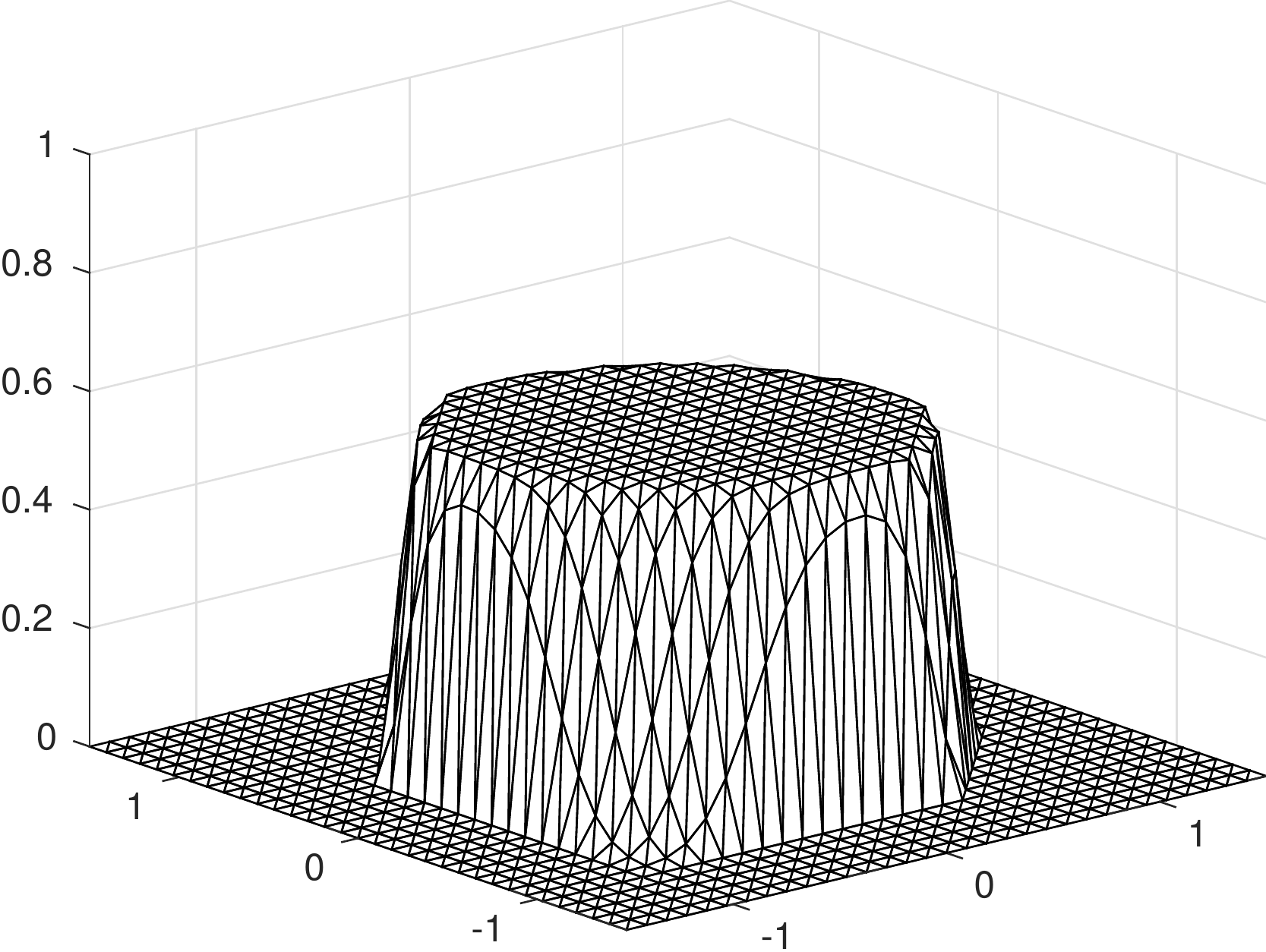} \hspace*{2mm}
\includegraphics[width=.45\linewidth]{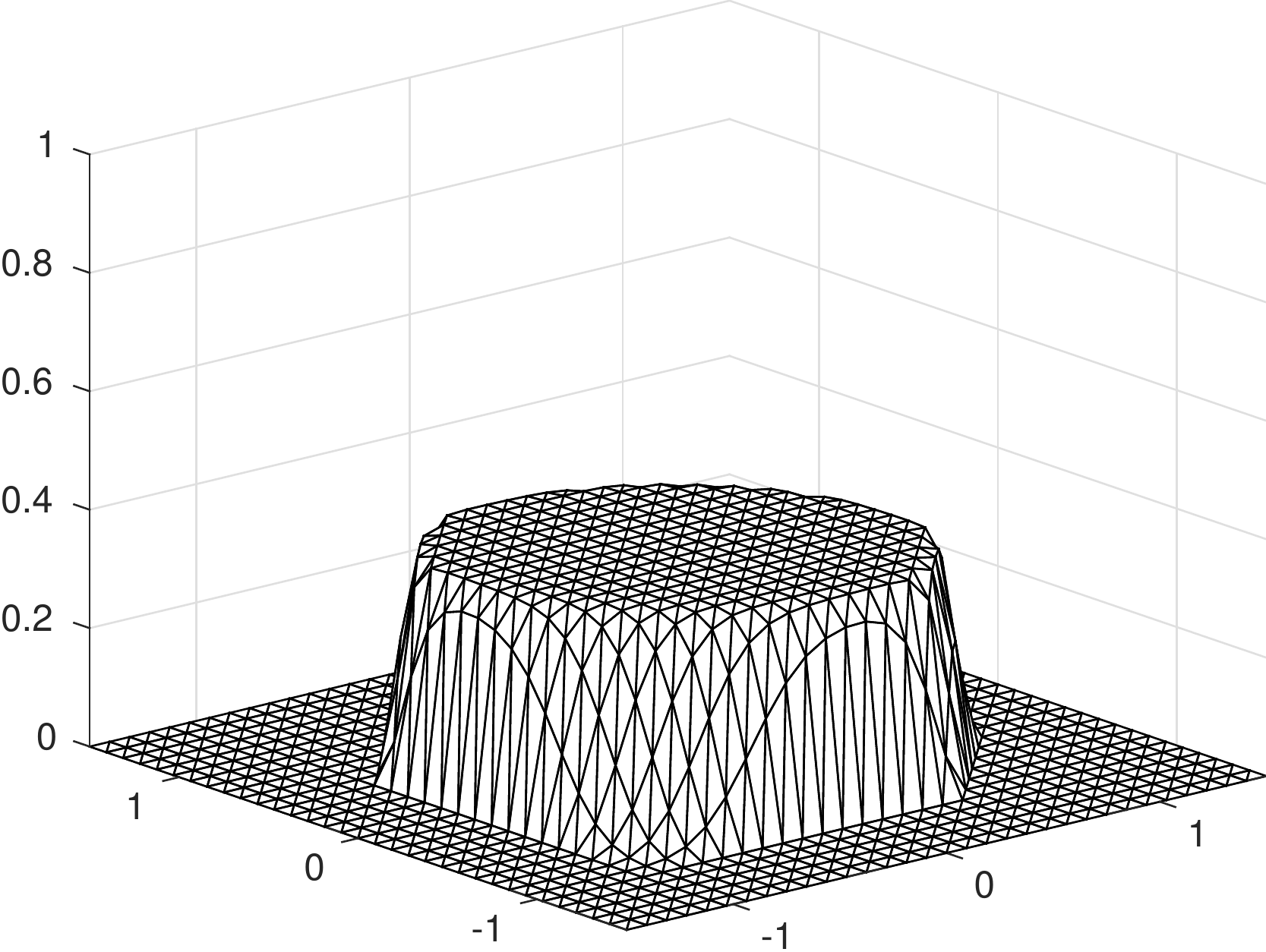} 
\caption{\label{fig:decr_disk} Numerical solutions for $t\approx 0.0, 0.1, 0.2, 0.3$
in Example~\ref{ex:decr_disk} computed with the implicit scheme and $\veps=0$.}
\end{figure}

\begin{figure}[p]
\includegraphics[width=.45\linewidth]{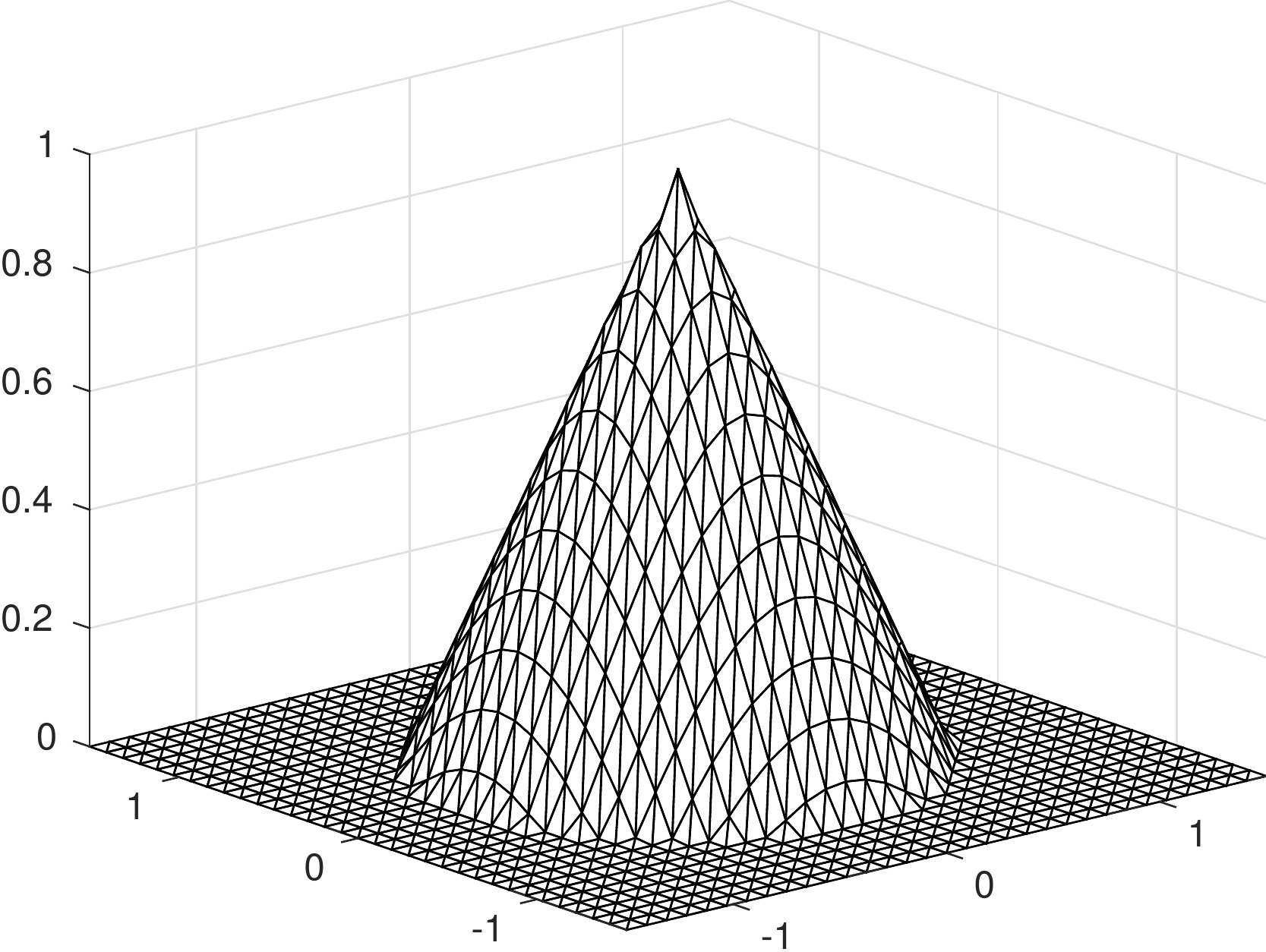} \hspace*{2mm}
\includegraphics[width=.45\linewidth]{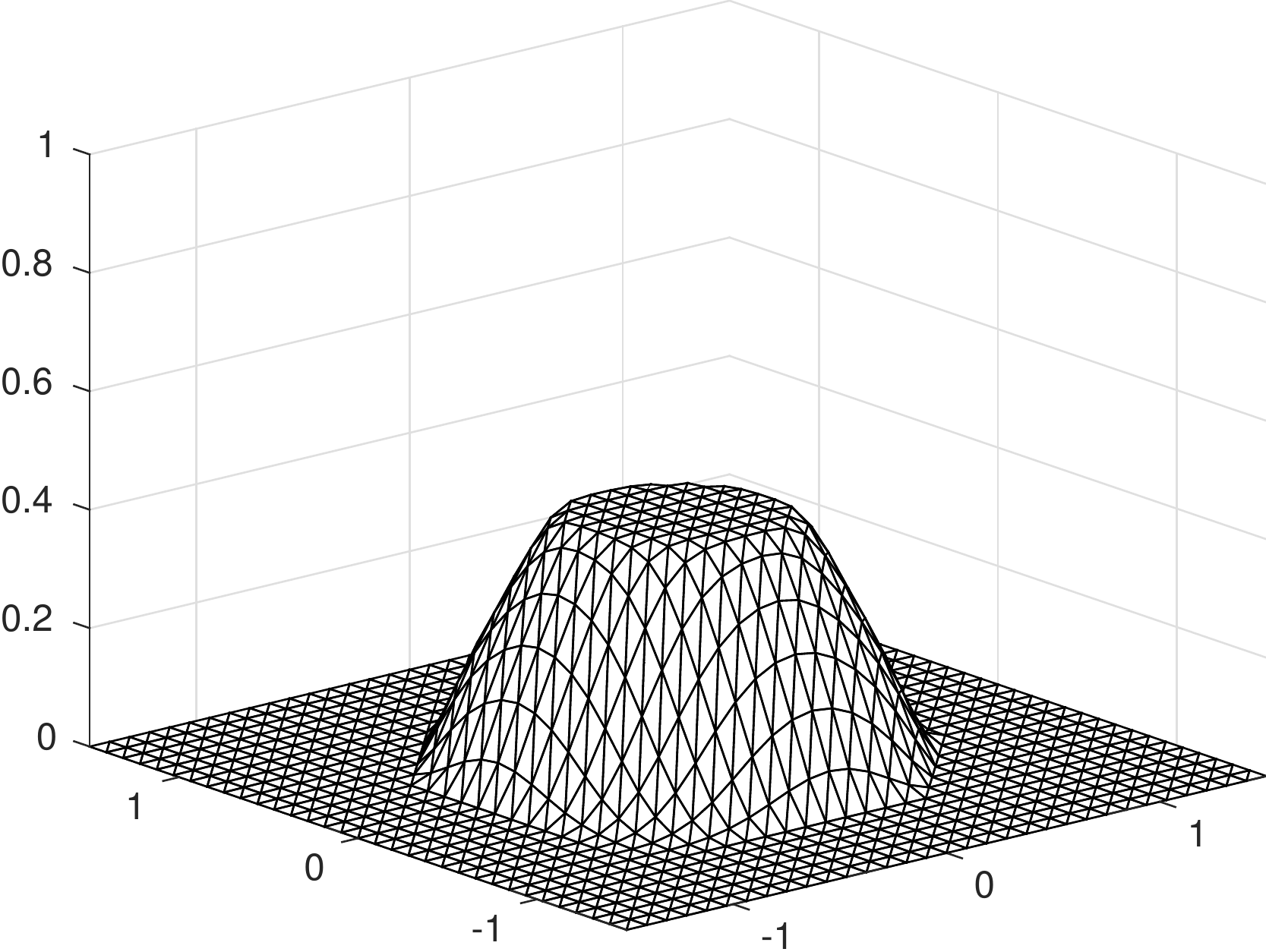} \\
\includegraphics[width=.45\linewidth]{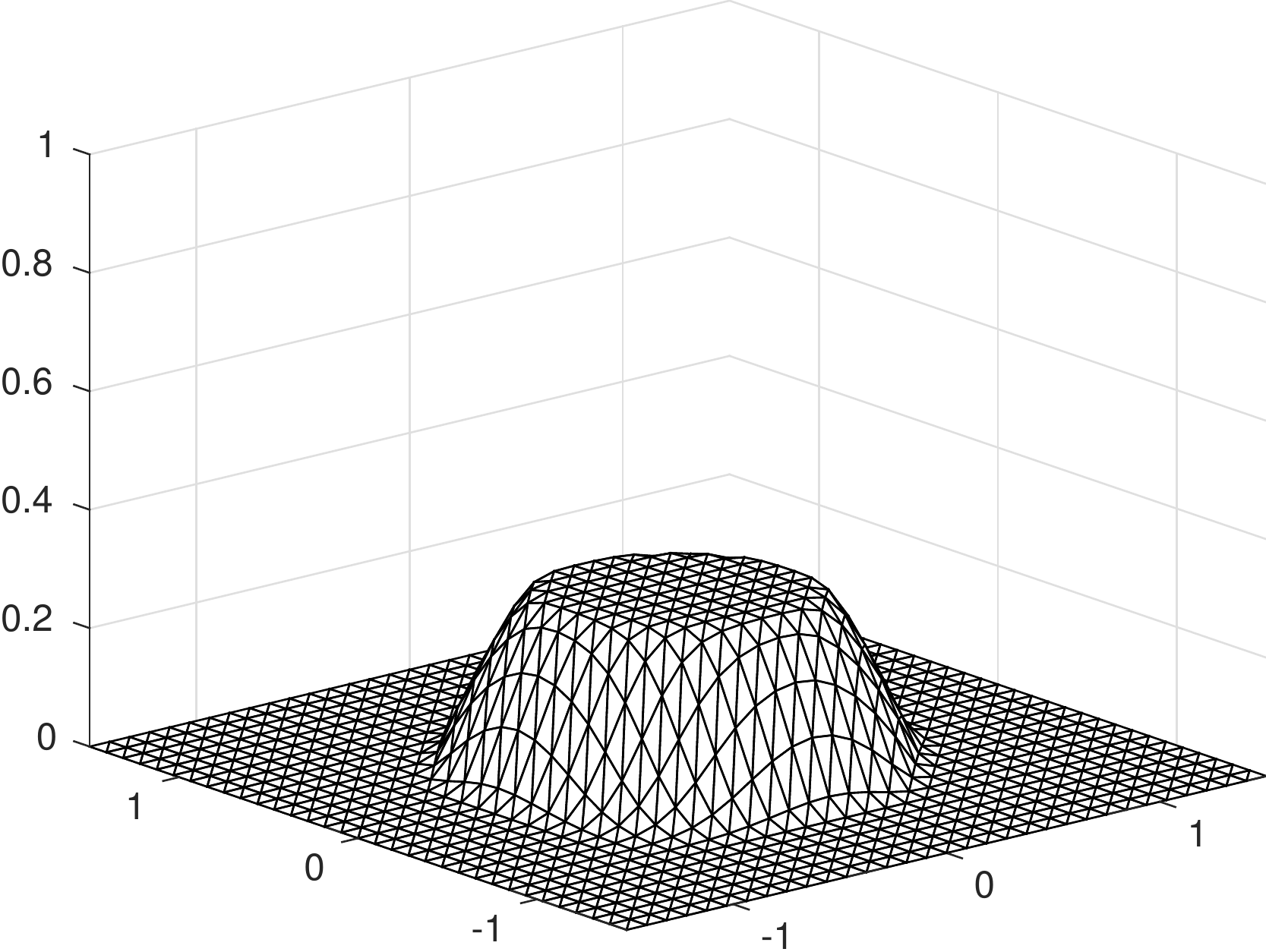} \hspace*{2mm}
\includegraphics[width=.45\linewidth]{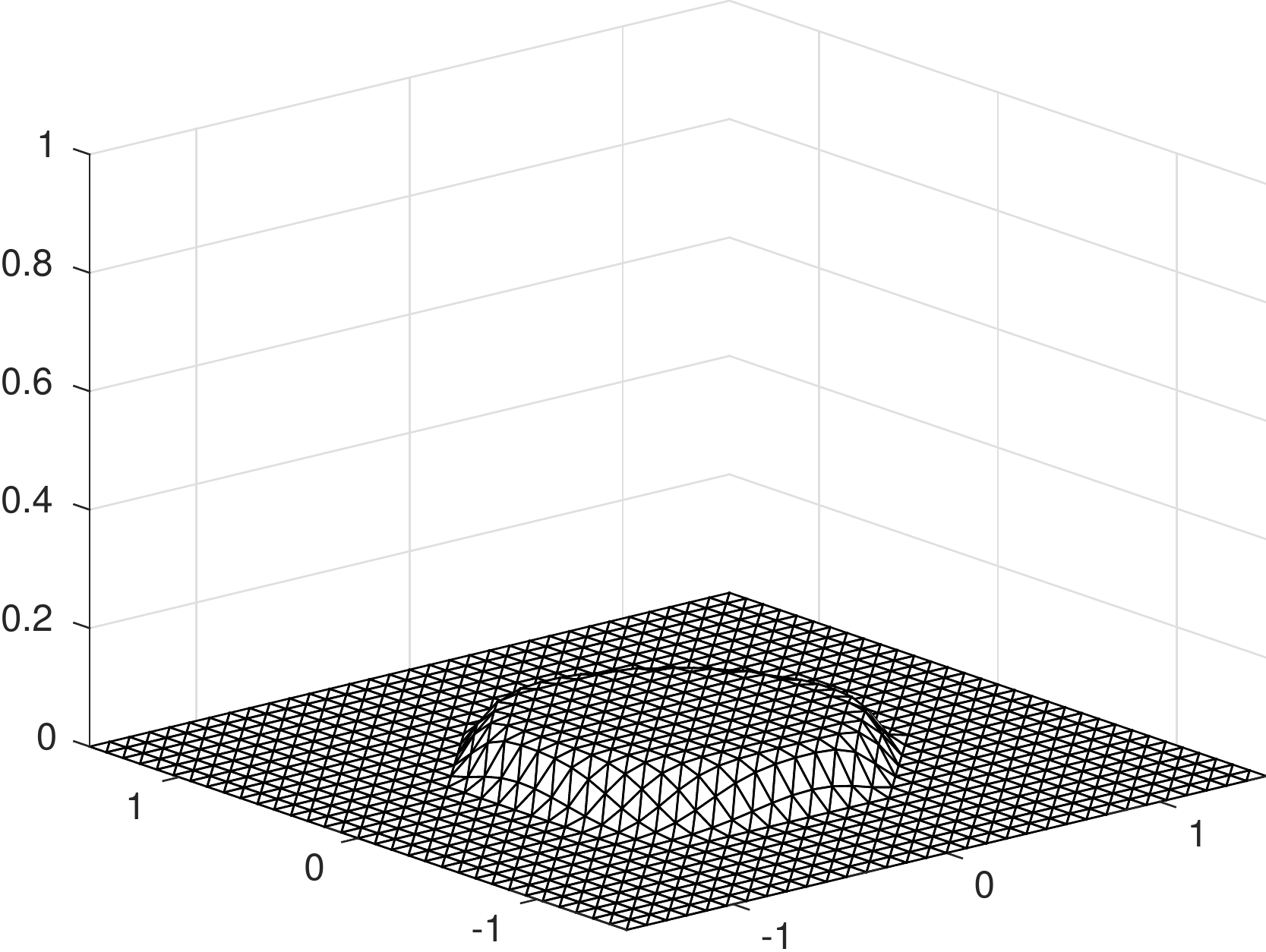} 
\caption{\label{fig:decr_cone} Numerical solutions for $t \approx 0.0, 0.05, 0.1, 0.15$
in Example~\ref{ex:decr_cone} computed with the implicit scheme and $\veps=0$.}
\end{figure}

%%%%%%%%%%%%%%%%%%%%%%%%%%%%%%%%%%%%%%%%%%%%%%%%%%%%%%%%%%%%%%%%%%%%%%%%%%%%%%%%%%

%%%%%%%%%%%%%%%%%%%%%%%%%%%%%%%%%%%%%%%%%%%%%%%%%%%%%%%%%%%%%%%%%%%%%%%%%%%%%%%%%%

\begin{figure}[p]
\includegraphics[width=.48\linewidth]{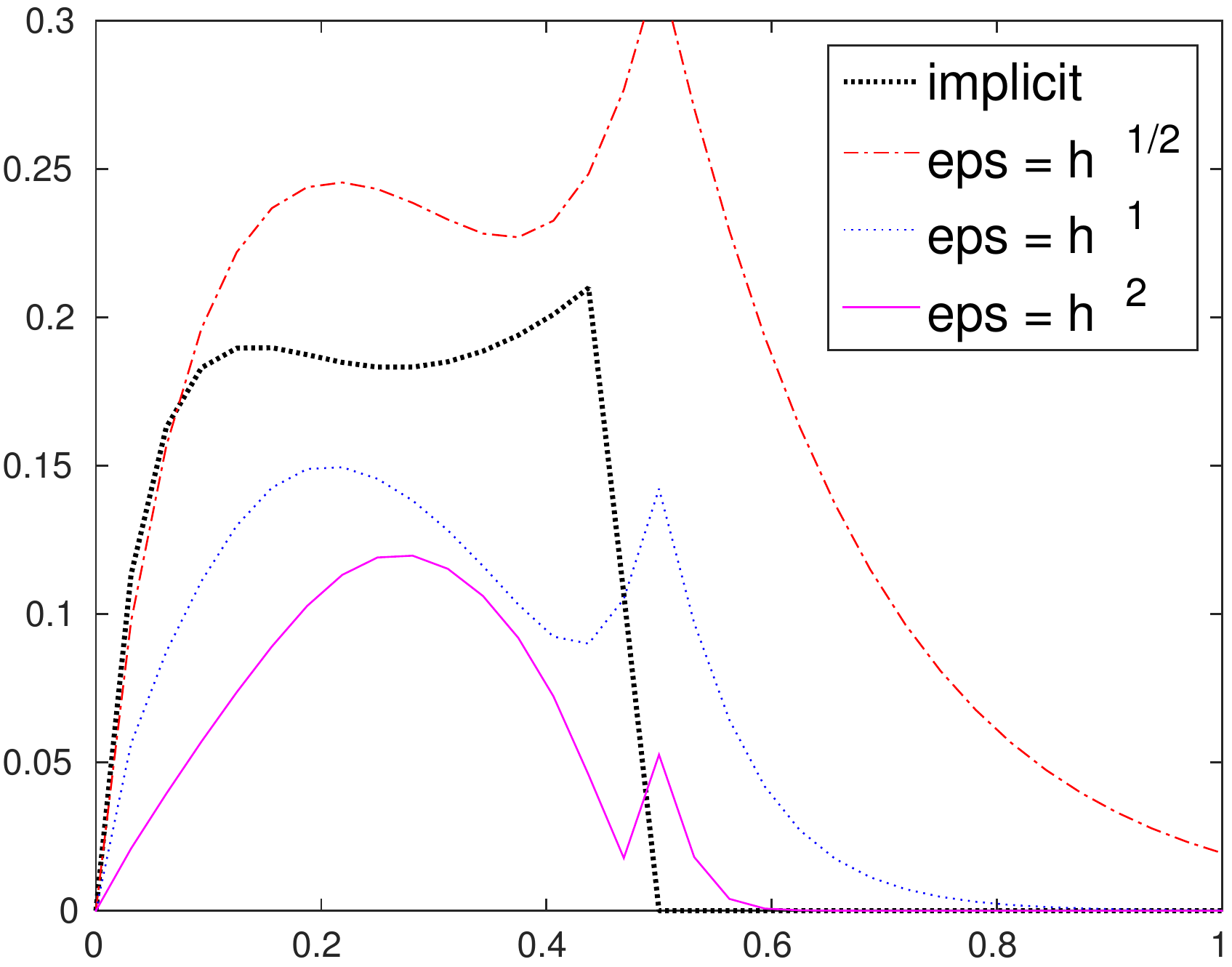} \hspace*{2mm}
\includegraphics[width=.48\linewidth]{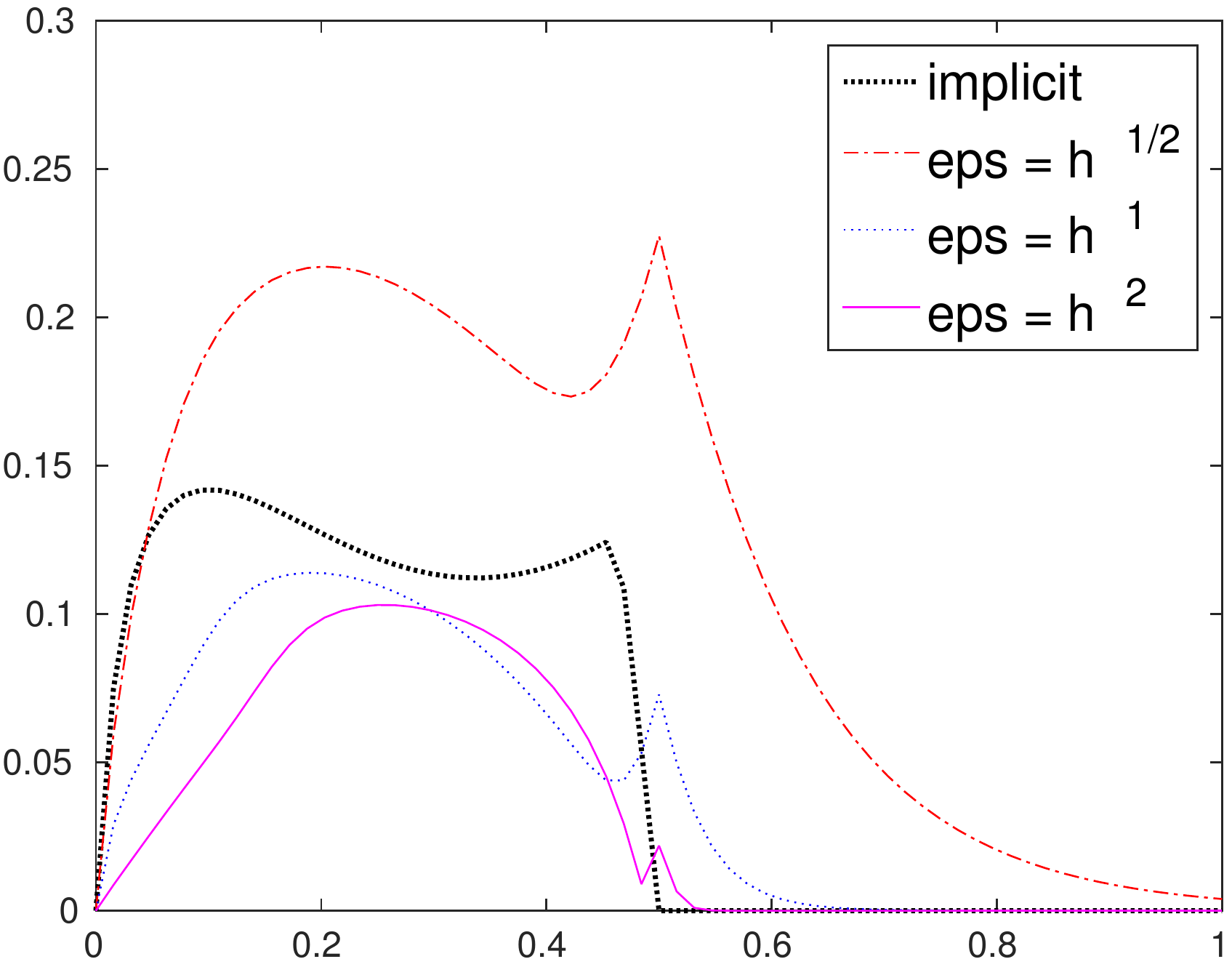} 
\caption{\label{fig:ex_1_l2_errs_time} $L^2$ errors as functions of
$t\in [0,1]$ in Example~\ref{ex:decr_disk} for the semi-implicit scheme with
$\veps = h^\a$, $\a=1/2,1,2$, and implicit approximations on triangulations
$\cT_\ell$, $\ell=4$ (left) and $\ell=5$ (right).}
\end{figure}

\begin{figure}[p]
\begin{minipage}{.55\linewidth}
{\small \begin{tabular}{|c|c|c|c|c|} \hline
$\ell$ & implicit & ${\veps = h^{1/2}}$ & ${\veps = h}$ & ${\veps = h^2}$ \\\hline\hline  
3 & 0.3135 & 0.4024 & 0.2515 & 0.1342 \\\hline
4 & 0.1999 & 0.3179 & 0.1495 & 0.1197 \\\hline
5 & 0.1421 & 0.2276 & 0.1139 & 0.1030 \\\hline
6 & 0.1313 & 0.1882 & 0.1005 & 0.0980 \\\hline
7 & --     & 0.1487 & 0.0813 & 0.0786 \\\hline
8 & --     & 0.1172 & 0.0701 & 0.0679 \\\hline
9 & --     & 0.0908 & 0.0595 & 0.0576 \\\hline
10 & --    & 0.0710 & 0.0510 & 0.0496 \\\hline
\end{tabular}}
\end{minipage}
\begin{minipage}{.44\linewidth}
\includegraphics[width=\linewidth]{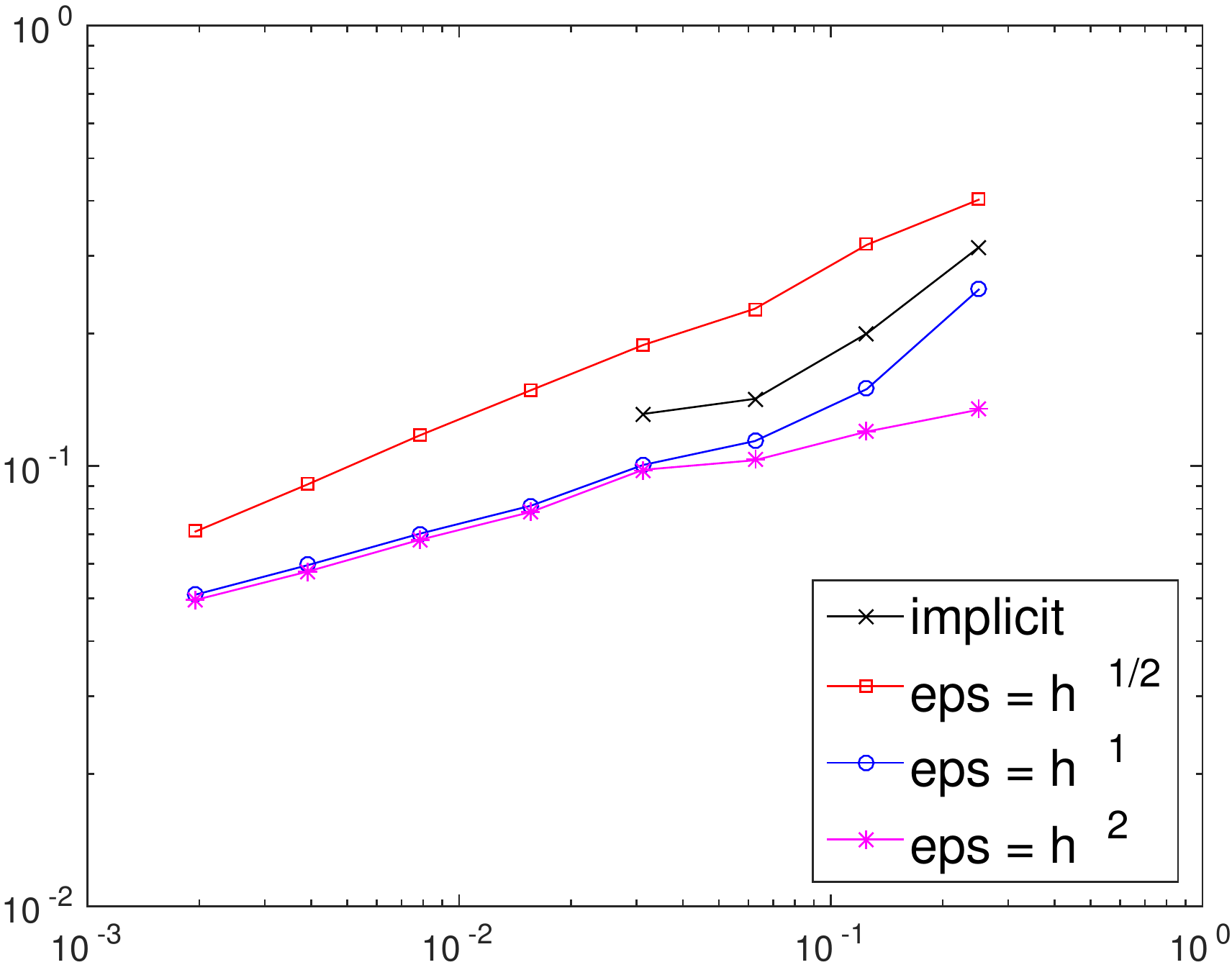}
\end{minipage}
\vspace*{-2mm}
\caption{\label{tab:ex_1_l2_errs} Maximal $L^2$ errors for different choices of 
$\veps$ and on different triangulations~$\cT_\ell$ of level $\ell$
in Example~\ref{ex:decr_disk}.}
\vspace*{2mm}
\end{figure}

\begin{figure}[p]
\includegraphics[width=.3\linewidth]{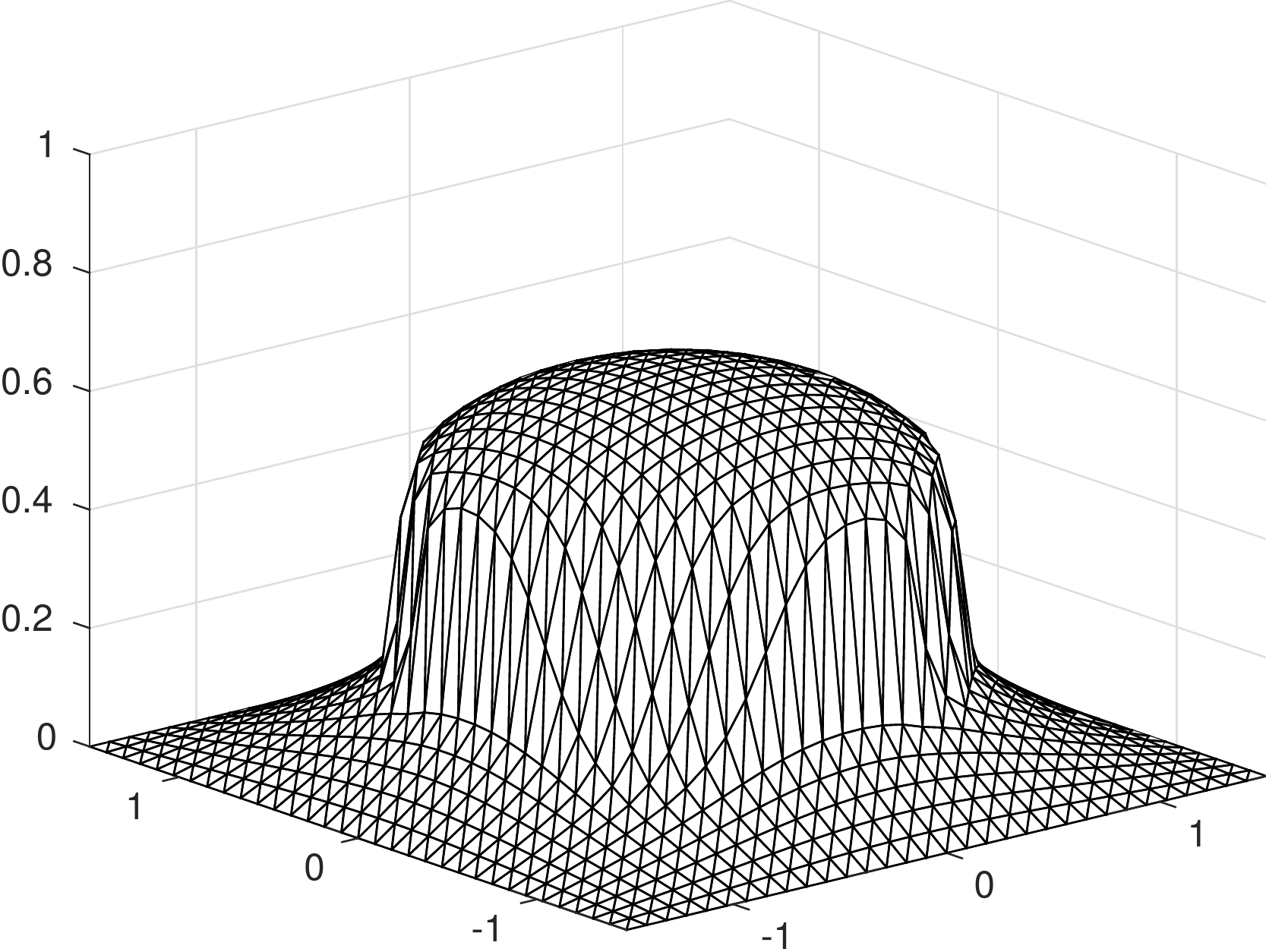} \hspace*{2mm}
\includegraphics[width=.3\linewidth]{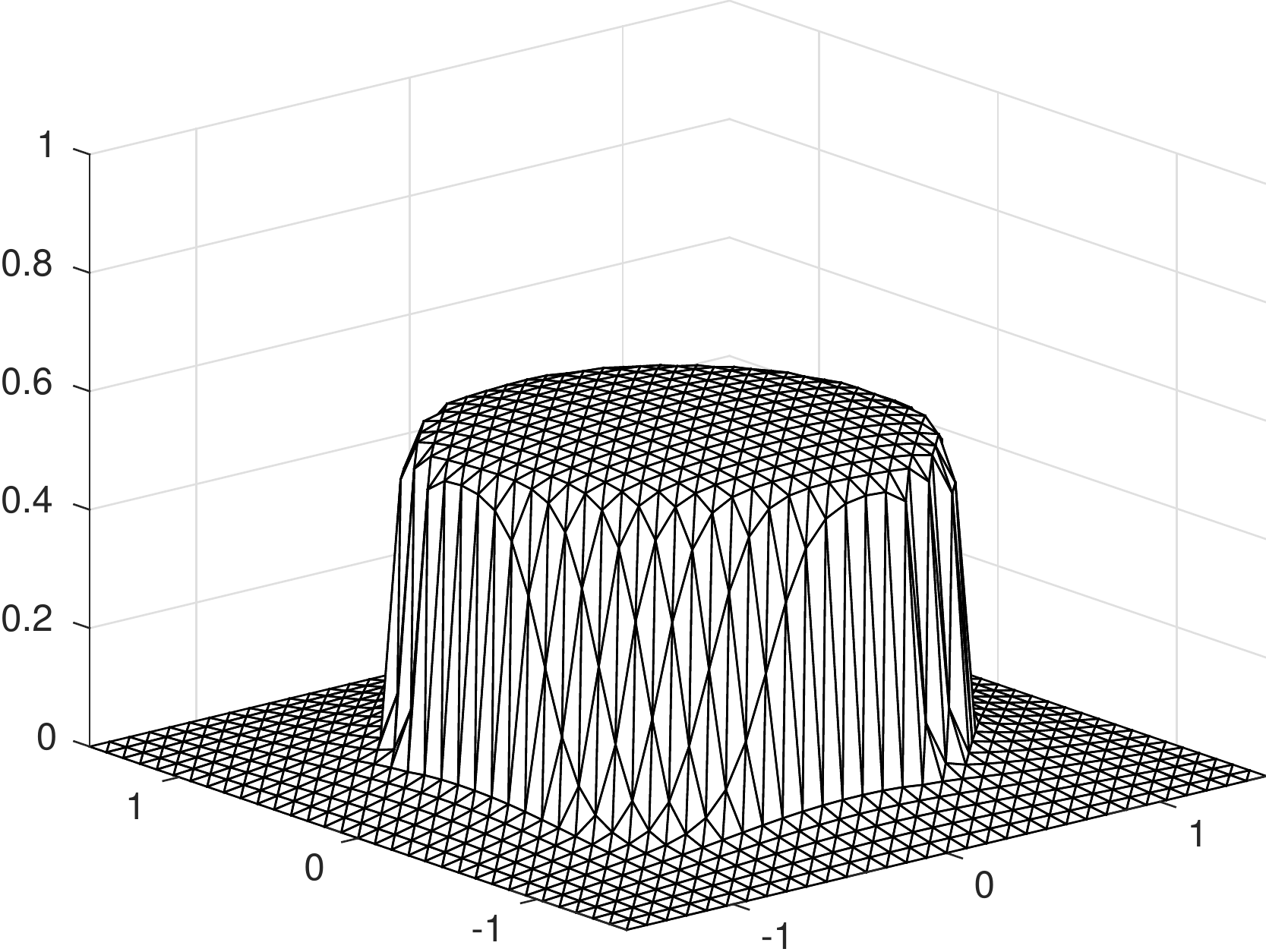}  \hspace*{2mm}
\includegraphics[width=.3\linewidth]{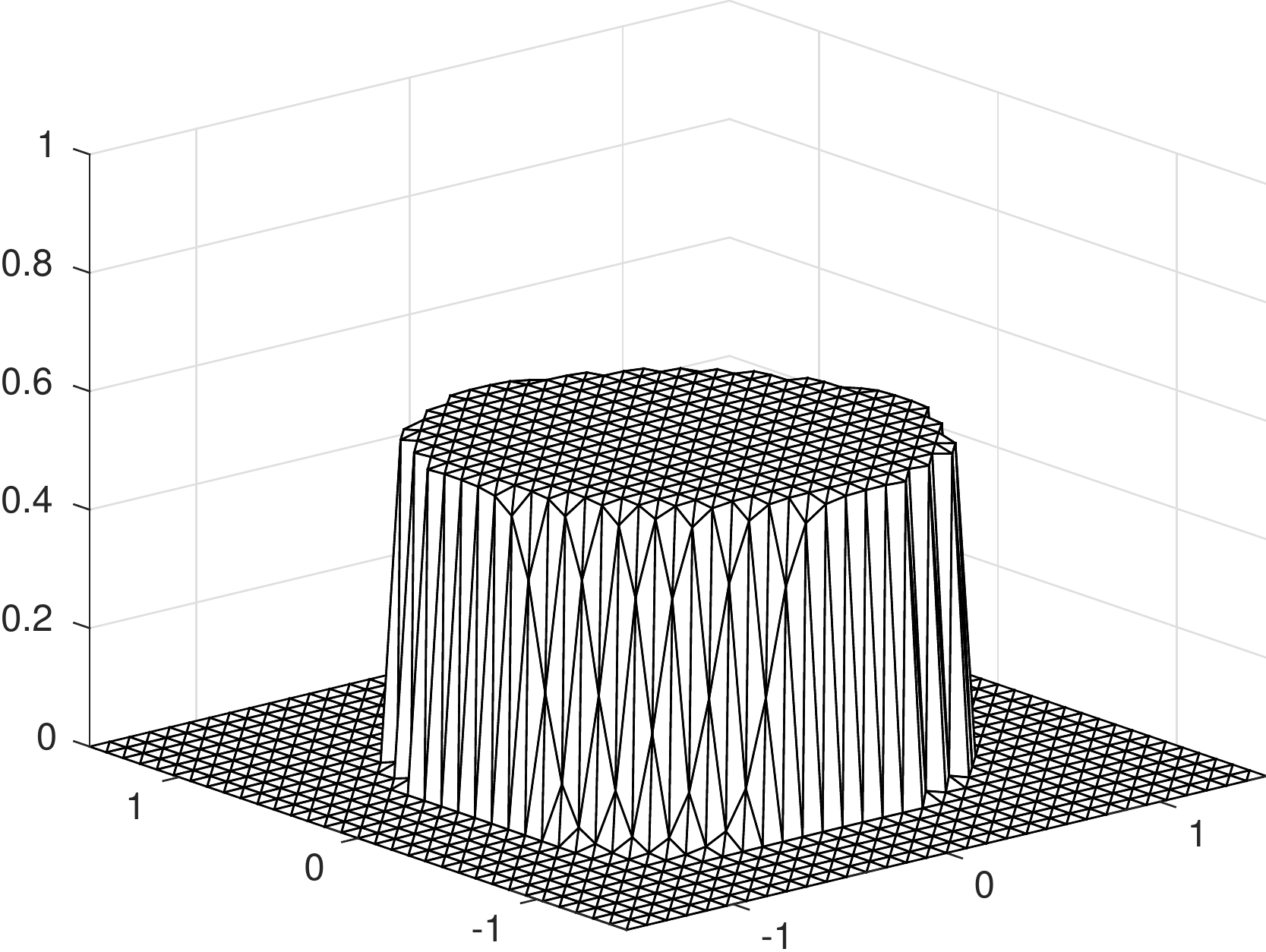} 
\caption{\label{fig:ex_1_comp_reg} Numerical approximations at $t\approx 0.2$ for
$\veps = h^\a$, $\a=1/2,1,2$ (left to right) in Example~\ref{ex:decr_disk}. 
In comparison with the solution obtained with the implicit scheme 
shown in Figure~\ref{fig:decr_disk} we observe a smoothing of the discontinuity.}
\end{figure}

%%%%%%%%%%%%%%%%%%%%%%%%%%%%%%%%%%%%%%%%%%%%%%%%%%%%%%%%%%%%%%%%%%%%%%%%%%%%%%%%%%

\begin{figure}[p]
\includegraphics[width=.48\linewidth]{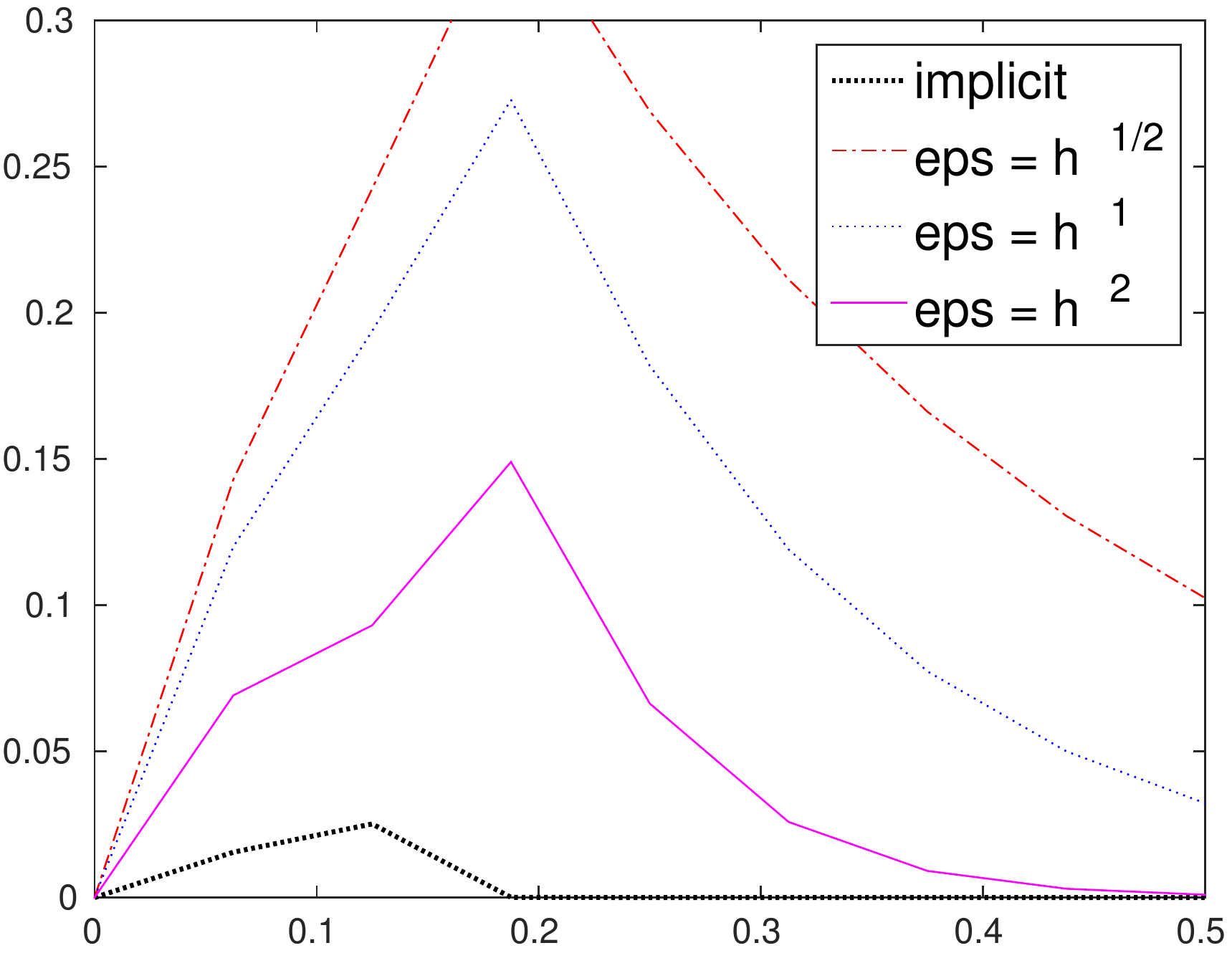} \hspace*{2mm}
\includegraphics[width=.48\linewidth]{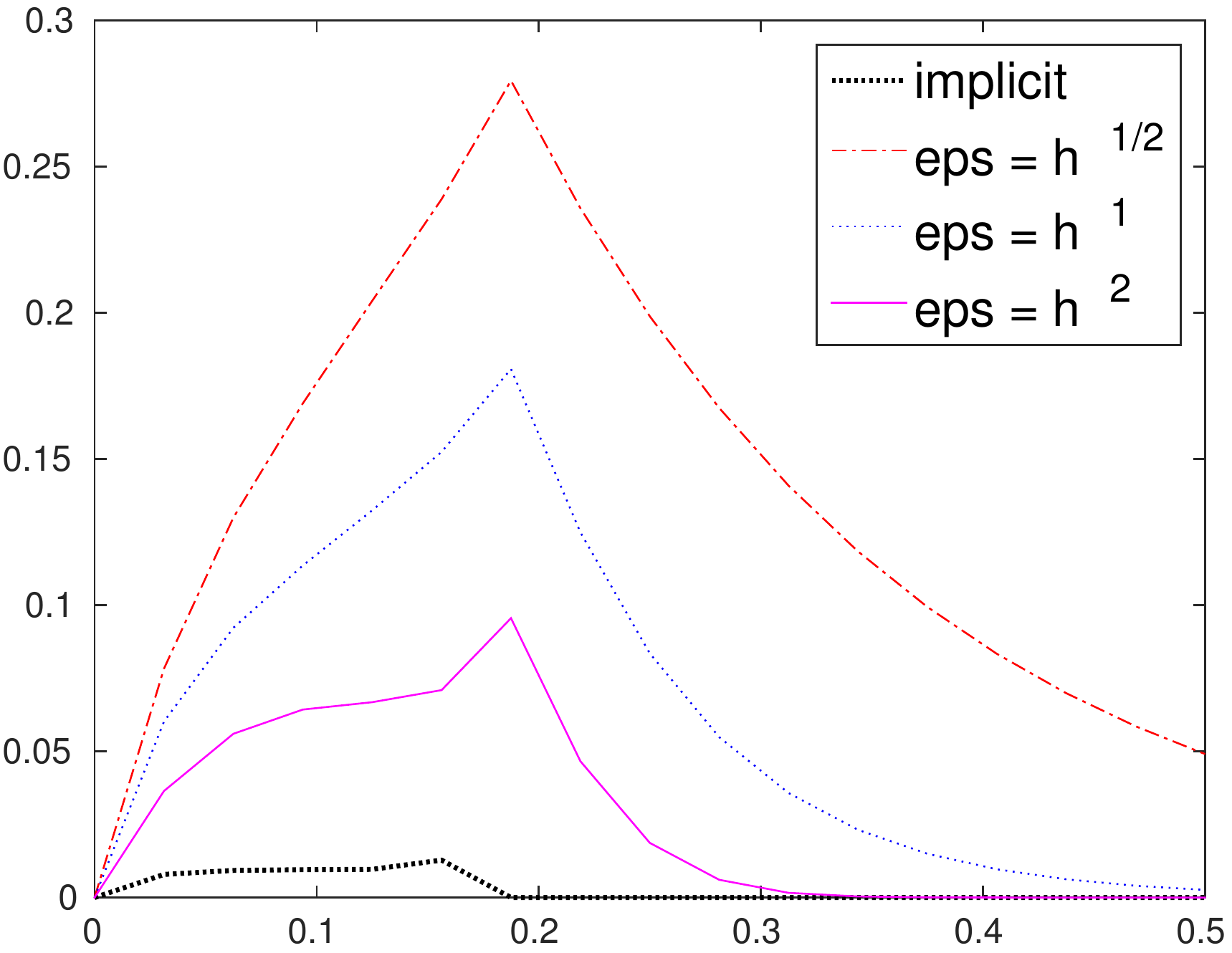} 
\caption{\label{fig:ex_2_l2_errs_time} $L^2$ errors as functions of
$t\in [0,1]$ in Example~\ref{ex:decr_cone} for the semi-implicit scheme with
$\veps = h^\a$, $\a=1/2,1,2$, and implicit approximations on triangulations
$\cT_\ell$, $\ell=4$ (left) and $\ell=5$ (right).}
\end{figure}

\begin{figure}[p]
\begin{minipage}{.55\linewidth}
{\small \begin{tabular}{|c|c|c|c|c|} \hline
$\ell$ & implicit & ${\veps = h^{1/2}}$ & ${\veps = h}$ & ${\veps = h^2}$ \\\hline\hline  
3 & 0.1100 & 0.3368 & 0.2936 & 0.1809 \\\hline
4 & 0.0753 & 0.3432 & 0.2729 & 0.1490 \\\hline
5 & 0.0129 & 0.2795 & 0.1808 & 0.0956 \\\hline
6 & 0.0066 & 0.2169 & 0.1087 & 0.0588 \\\hline
7 & --     & 0.1615 & 0.0617 & 0.0364 \\\hline
8 & --     & 0.1161 & 0.0341 & 0.0241 \\\hline
9 & --     & 0.0814 & 0.0186 & 0.0149 \\\hline
10 & --    & 0.0585 & 0.0101 & 0.0089 \\\hline
\end{tabular}} 
\end{minipage}
\begin{minipage}{.44\linewidth}
\includegraphics[width=\linewidth]{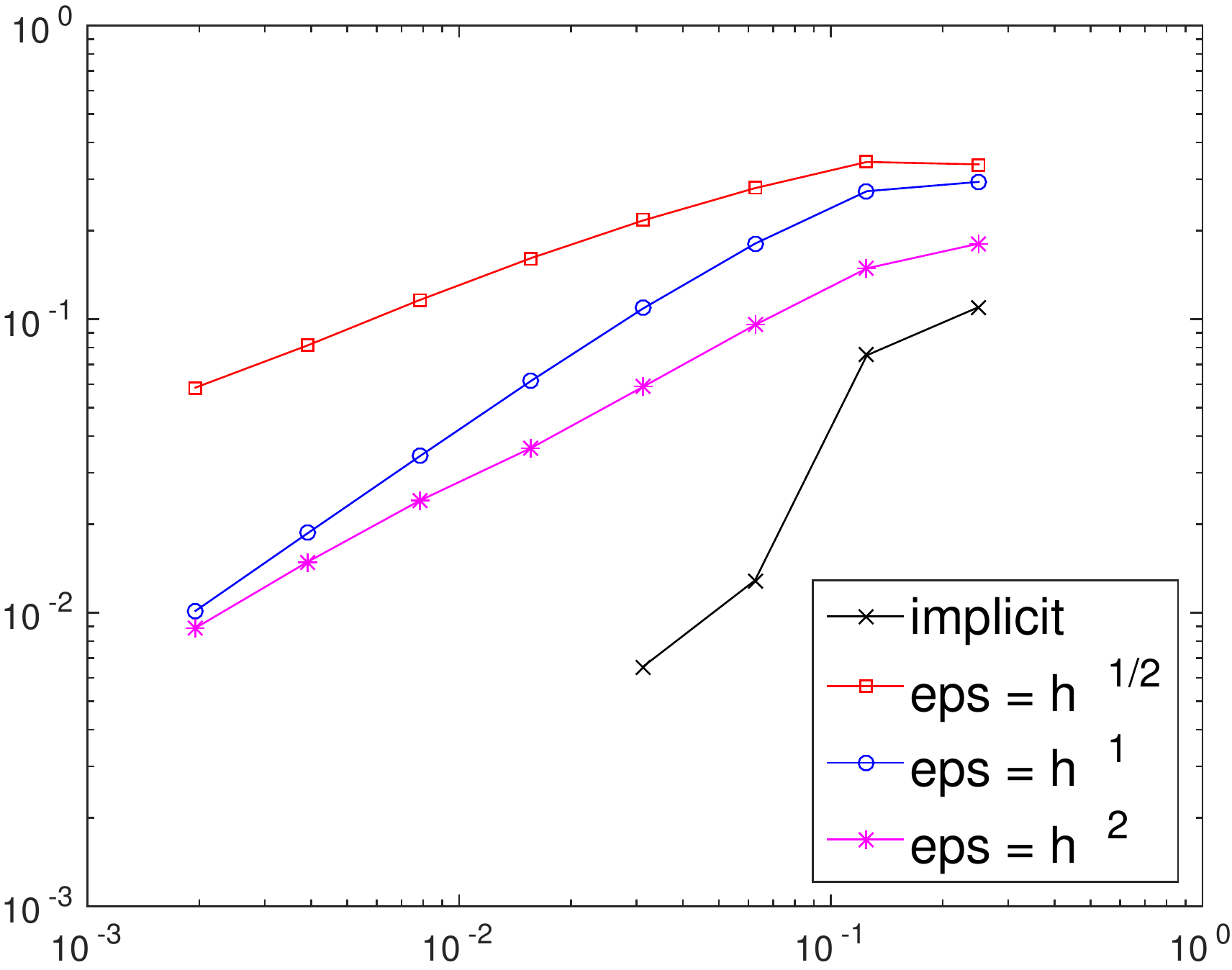}
\end{minipage}
\vspace*{-2mm}
\caption{\label{tab:ex_2_l2_errs} Maximal $L^2$ errors for different choices of 
$\veps$ and on different triangulations~$\cT_\ell$ of level $\ell$
in Example~\ref{ex:decr_cone}.}
\vspace*{2mm}
\end{figure}

\begin{figure}[htb]
\includegraphics[width=.3\linewidth]{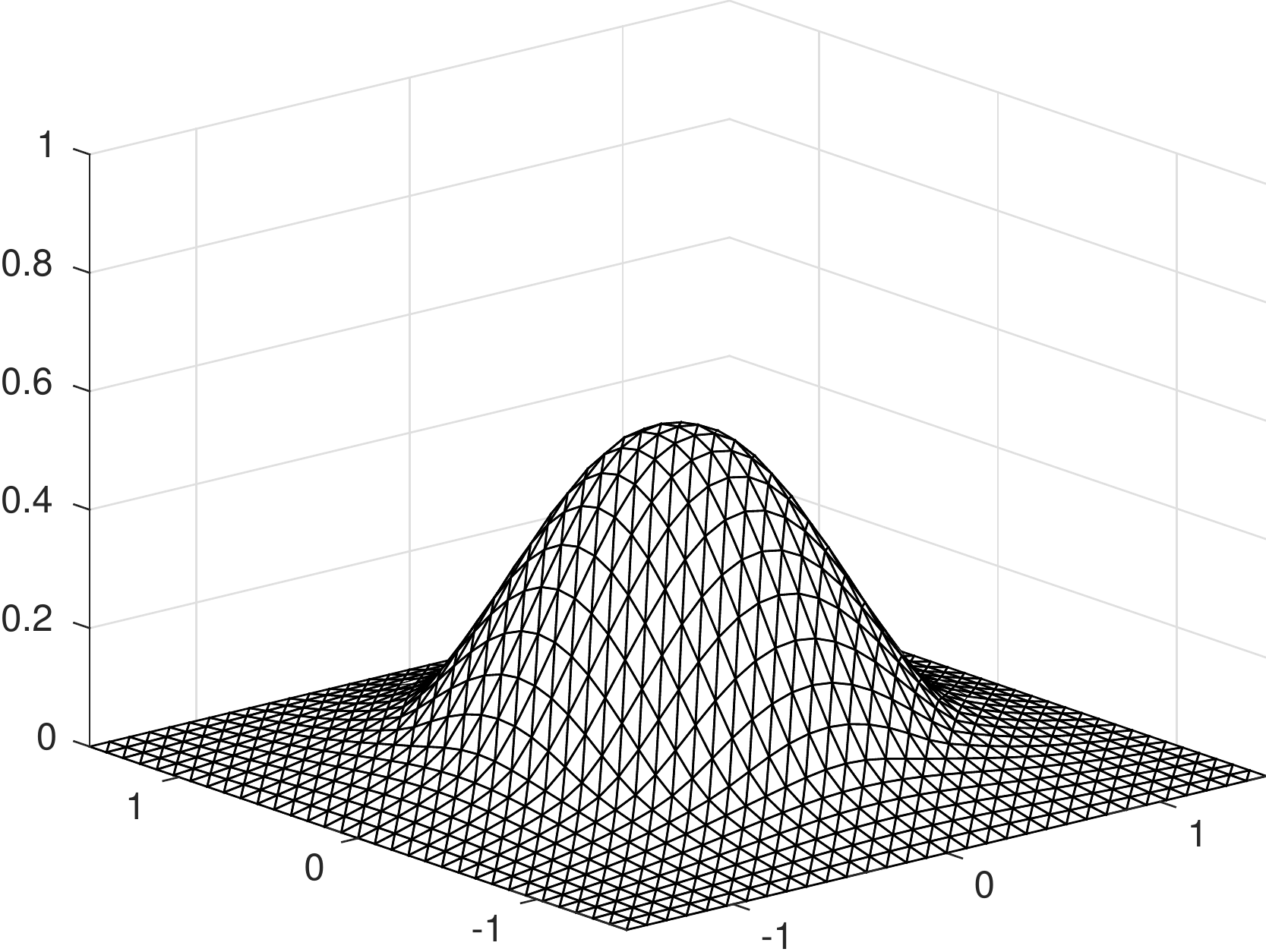} \hspace*{2mm}
\includegraphics[width=.3\linewidth]{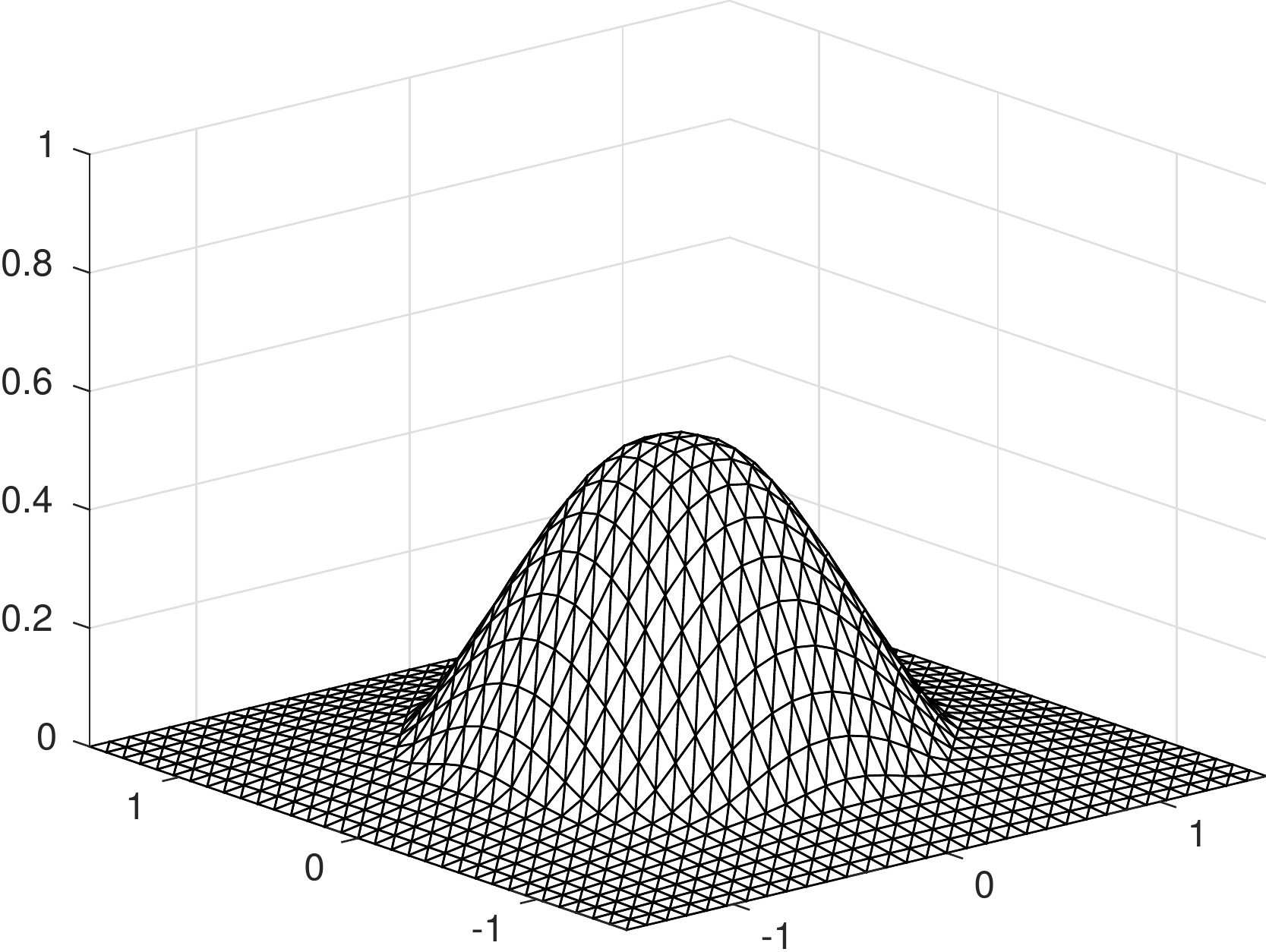} \hspace*{2mm}
\includegraphics[width=.3\linewidth]{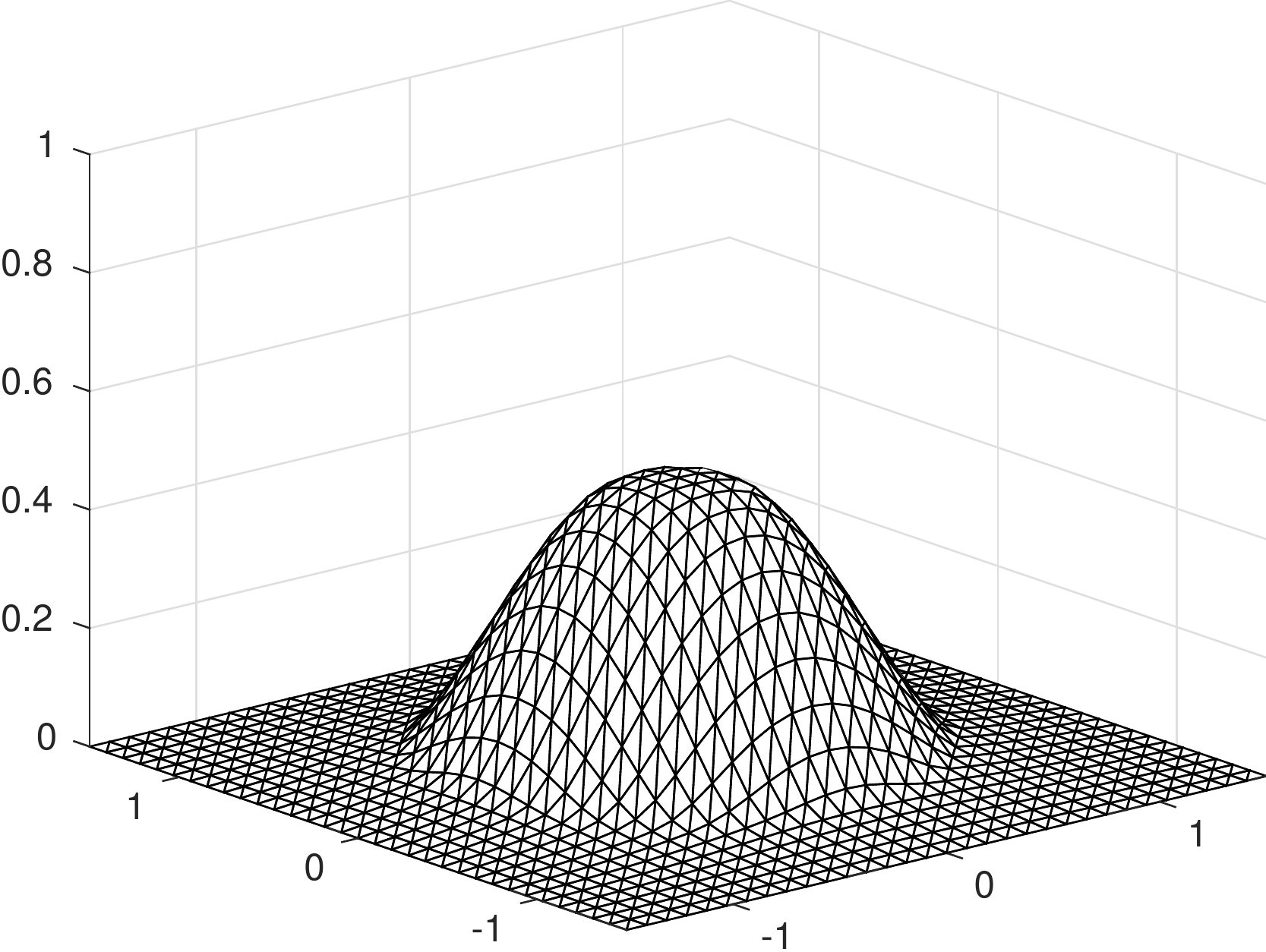} 
\caption{\label{fig:ex_2_comp_reg} Numerical approximations at $t\approx 0.1$ for
$\veps = h^\a$, $\a=1/2,1,2$ (left to right) in Example~\ref{ex:decr_cone}. 
In comparison with the solution obtained with the implicit scheme 
shown in Figure~\ref{fig:decr_cone} we observe a rounding of the kinks.}
\end{figure}

%%%%%%%%%%%%%%%%%%%%%%%%%%%%%%%%%%%%%%%%%%%%%%%%%%%%%%%%%%%%%%%%%%%%%%%%%%%%%%%%%%

\subsection{Experimental observations}
We computed numerical approximations with implicit and the semi-implicit
schemes on sequences of quasi-uniform triangulations with mesh-size $h$, 
using different regularization parameters $\veps$, and the fixed relation 
$\tau = h/4$. 

\subsubsection{Results for Example~\ref{ex:decr_disk}}
In Figure~\ref{fig:ex_1_l2_errs_time} we plotted the $L^2$ errors
for the implicit and the semi-implicit schemes with regularization
parameters $\veps=h^\a$, $\a=1,1/2,2$, as functions of $t\in [0,T]$, $T=1$, 
obtained for the triangulations $\cT_4$ and $\cT_5$. 
We observe that the $L^2$ errors decrease monotonically with~$\veps$ during most of
the evolution with a certain stagnation, and that the errors obtained with the implicit scheme
are comparable as long as the solution is nontrivial. In particular, the
implicit scheme predicts accurately the extinction time $t=0.5$ in contrast to the
approximations obtained with the regularized, semi-implicit method. 
The maximal $L^2$~errors on $t\in [0,T]$ for several triangulations of
decreasing mesh size displayed in Figure~\ref{tab:ex_1_l2_errs} show that
for a larger value of~$\veps$ we obtain a better experimental convergence rate.
This confirms the critical dependence of our error estimates on the 
regularization parameter $\veps$. No clear experimental convergence rate can be 
deduced for the implicit approach although we used a
stringent stopping criterion (residual
less than $\d_{\rm stop} = h^5$ in the $\ell^2$-norm).
This condition is dictated by theory of the alternating direction
method of multipliers (ADMM) of~\cite{BarMil17-pre}, and
guarantees that the computational results are not due to poor
resolution, but prevents ADMM from converging beyond~6 uniform
refinements, namely for $h\le 5\cdot 10^{-2}$.
Figure~\ref{fig:ex_1_comp_reg} displays snapshots of numerical solutions
on the same triangulation $\cT_5$ but with different regularization parameters
$\veps$ at $t\approx 0.2$. As expected, the smearing effect across
the jump discontinuity set of the exact solution depends on $\veps$.
The choice $\veps = h^2$ appears to give very accurate approximations on $\cT_5$
although, as depicted in Figure~\ref{tab:ex_1_l2_errs}, it exhibits
the worse experimental convergence rate.

\subsubsection{Results for Example~\ref{ex:decr_cone}}
The results of our numerical experiments shown in 
Figures~\ref{fig:ex_2_l2_errs_time}, \ref{tab:ex_2_l2_errs}, 
and~\ref{fig:ex_2_comp_reg} are similar to those for Example~\ref{ex:decr_disk}. 
Here, we observe the best experimental convergence rate for the
choice $\veps=h$ instead of $\veps = h^{1/2}$ which may be explained
by the uniform Lipschitz continuity of the solution. The implicit treatment
leads to smaller approximation errors but, as in
Example~\ref{ex:decr_disk}, the stringent stopping criterion for
ADMM prevents its convergence beyond six uniform mesh
refinements.

\subsubsection{Conclusions}
Our numerical experiments confirm that the error estimates
for the semi-implicit scheme depend on the inverse of the regularization
parameter $\veps$. The experimental convergence rates
are better than those predicted by theory: for $\tau$
proportional to $h$ we expect no convergence (see Corollary \ref{thm:tv_flow_simpl}).
This feature appears to be related to special
regularity properties of the explicit solutions such as $\p_t u(t) \in L^\infty(\O)$
and $u(t)\in W^{1,\infty}(\O)$ for all $t\in (0,T)$ 
in Examples~\ref{ex:decr_disk} and~\ref{ex:decr_cone}, respectively.
The implicit scheme leads to highly accurate approximations that 
provide good predictions of extinction times, but 
require a substantially larger computational effort. In fact, finding 
reliable stopping criteria for the iterative solver, the alternating direction
method of multipliers, is a challenging task. Therefore, the semi-implicit scheme
may also be applied as iterative solver for each time step of
the implicit scheme.

\bigskip 
% \clearpage
{\em Acknowledgments.} SB and RHN acknowledge hospitality  
of the Hausdorff Research Institute for Mathematics within the trimester program 
{\em Multiscale Problems: Algorithms, Numerical Analysis and Computation}.
RHN was partially supported as Simons Visiting Professor, in connection 
with the Oberwolfach Workshop {\em Emerging Developments in Interfaces and
Free Boundaries}, as well as by the NSF grant DMS-1411808. SB also
acknowledges support by the DFG priority programme SPP-1962. 

\bibliographystyle{amsalpha}
\bibliography{refs}

\end{document}